\documentclass[oneside,english,reqno]{amsart}

\usepackage{xparse}
\let\realItem\item % save a copy of the original item
\makeatletter
\NewDocumentCommand\myItem{ o }{%
   \IfNoValueTF{#1}%
      {\realItem}% add an item
      {\realItem[#1]\def\@currentlabel{#1}}% add an item and update label
}
\makeatother

\usepackage{enumitem}    
\setlist[enumerate]{
    before=\let\item\myItem,       % use \myItem in enumerate
    label=\textnormal{(\arabic*)}, % format the label
    widest=(2')                    % set the widest label
}

\usepackage{color}
\usepackage{amssymb}
\usepackage{amsmath}
\usepackage{amsthm}
\usepackage{bbm}
\usepackage{amscd}
\usepackage{mathrsfs}
\usepackage{tikz}
\usepackage{bm}
\theoremstyle{definition}
\newtheorem{dfn}{Definition}[section]
\theoremstyle{plain}
\newtheorem{thm}[dfn]{Theorem}

\newtheorem{lm}[dfn]{Lemma}

\theoremstyle{remark}

\newtheorem{rem}[dfn]{Remark}

\newcommand{\N}{\mathbb{N}}
\newcommand{\quotient}[2]{
\mathchoice{  \text{\raise1ex\hbox{$#1$}\!\Big/\!\lower1ex\hbox{$#2$}} }% \displaystyle
                  {  \text{\raise1pt\hbox{$#1$}\big/\lower1pt\hbox{$#2$}} }% \textstyle
                  {  {#1}\,/\,{#2}  }% \scriptstyle
                  {  {#1}\,/\,{#2}  }% \scriptscriptstyle
}

\numberwithin{equation}{section}

\usepackage[linktocpage,colorlinks,linkcolor=blue,anchorcolor=blue,citecolor=blue,urlcolor=black,pagebackref]{hyperref}

\begin{document}

\title{Double $q$-Wigner Chaos and the Fourth Moment}
\author{Todd Kemp}
\address{Department of Mathematics, University of California, San Diego 9500 Gilman Drive \#0112 La Jolla, CA  92093-0112, USA}
\email{tkemp@ucsd.edu}
\thanks{T. Kemp supported in part by NSF Grant DMS-2400246}
\author{Akihiro Miyagawa}
\address{Department of Mathematics, University of California, San Diego 9500 Gilman Drive \#0112 La Jolla, CA  92093-0112, USA}
\email{amiyagwa@ucsd.edu}
\thanks{A. Miyagawa acknowledges support from JSPS Overseas Research Fellowship.}

\begin{abstract} In this paper, we prove the Fourth Moment Theorem for sequences of (noncommutative) random variables given as sums of two stochastic integrals in two different parity orders of chaos, both in the free Wigner chaos setting and a $q$-Gaussian generalization.  Specifically, we prove that convergence to the appropriate central limit distribution is mediated entirely by the behavior of the first four (mixed) moments of the two stochastic integrals, which in turn controls the $L^2$ norms of partial integral contractions of those kernels.  The key step in both the free and $q$-Gaussian settings is a polarization identity for fourth cumulants of sums which holds only when the two terms have differing parities.  These results are analogous to the recent preprint {\em Fourth-Moment Theorems for Sums of Multiple Integrals} by Basse-O'Connor, Kramer-Bang, and Svedsen in the classical Wiener-It\^o chaos setting.
\end{abstract}

\maketitle

%\tableofcontents

\section{Introduction}\label{Introduction}

In classical stochastic analysis, the Wiener chaos plays a foundational role.  Let $W=(W_t)_{t\ge 0}$ denote a standard Brownian motion on $\mathbb{R}$, defined on a probability space $(\Omega,\mathscr{F},\mathbb{P})$ (i.e.\ $\Omega$ may be taken as the path space over $\mathbb{R}$ with $\mathbb{P}$ the Wiener measure started at $0$).  For any positive integer $n$, and any function $f\in L^2(\mathbb{R}_+^n)$, the multiple Wiener--It\^o stochastic integral
\[ I^W(f) = \int f(t_1,\ldots,t_n)\,dW_{t_1}\cdots dW_{t_n} \]
is an $L^2(\Omega,\mathscr{F},\mathbb{P})$ random variable; indeed $\|I_n^W(f)\|_{L^2}^2 = n!\|f\|_2^2$. The image of $L^2(\mathbb{R}_+^n)$ under $I^W$ is the {\bf $n$th Wiener chaos} $\Gamma^n$; it is a closed subspace of $L^2(\Omega,\mathscr{F},\mathbb{P})$, and indeed $L^2(\Omega,\mathscr{F},\mathbb{P}) = \bigoplus_{n\ge 0} \Gamma^n$ decomposes orthogonally.  Precisely: every $L^2(\Omega,\mathscr{F},\mathbb{P})$ random variable $X$ has an orthogonal expansion $X = \sum_{n\ge 0} I^W(f_n)$ for some sequence $f_n\in L^2(\mathbb{R}_+^n)$ with $n!\|f_n\|_2^2 = 1$; and the sequence $f_n$ is unique if the functions are fully symmetric: $f_n(t_1,\ldots,t_n) = f_n(t_{\sigma 1},\ldots,t_{\sigma n})$ for any permutation $\sigma\in S_n$.

Properties of random variables in different orders of chaos have been studied extensively over the last 75 years or more.  It is known from early on, for example, that if $X\in \Gamma^n$ with $n\ge 1$ then the distribution of $X$ has a smooth density with tails comparable to those of the $n$th power of a normal random variable, \cite{Janson-Book}.  The chaos expansion (and multivaratiate / infinite dimensional versions developed by Cameron) played a key foundational role in the development of Malliavin calculus, cf. \cite{Nualart-Book}.

In 2005, Nualart and Peccati \cite{NuPec2005} considered the question of normal approximation within a fixed order of chaos.  They proved a landmark result, now known as a {\em Fourth Moment Theorem}, which follows.  (See Section \ref{sect.Wigner.chaos} below for a full description of the integral contraction operators $\overset{p}{\frown}$.)

\begin{thm}[Nourdin, Peccati, \cite{NuPec2005}] \label{thm.NuPec} Let $f_k$ be a sequence of real-valued symmetric functions in $L^2(\mathbb{R}^n_+)$ normalized so that $n!\|f_k\|_2\to 1$ as $k\to\infty$.  Then the following three conditions are equivalent.
\begin{enumerate}
    \item $\displaystyle{\lim_{k\to\infty}\mathbb{E}[I^W(f_k)^4] = 3}$.
    \item $\displaystyle{\lim_{k\to\infty} f_k \overset{p}{\frown} f_k = 0}$ in $L^2(\mathbb{R}_+^{2(n-p)})$ for $p=1,\ldots,n-1$.
    \item The sequence $(I^W(f_k))_{k\in\N}$ converges in distribution to a standard Gaussian random variable.
\end{enumerate}
\end{thm}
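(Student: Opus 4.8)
The plan is to anchor all three equivalences to one explicit formula for the fourth cumulant $\kappa_4(I^W(f_k)) = \mathbb{E}[I^W(f_k)^4] - 3\,\mathbb{E}[I^W(f_k)^2]^2$, and then to close the loop via $(1)\Leftrightarrow(2)$, $(3)\Rightarrow(1)$, and $(1)\Rightarrow(3)$. The first ingredient is the multiplication formula for multiple Wiener--It\^o integrals: for symmetric $f,g\in L^2(\mathbb{R}_+^n)$,
\[
I^W(f)\,I^W(g) = \sum_{r=0}^{n} r!\binom{n}{r}^2\, I^W\!\big(f\,\widetilde{\overset{r}{\frown}}\,g\big),
\]
where $f\,\widetilde{\overset{r}{\frown}}\,g$ is the symmetrization of the $r$-fold contraction (an integral of order $2n-2r$). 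Squaring $I^W(f)$ by this formula, taking expectations, and using orthogonality of distinct chaoses yields a closed expression for $\mathbb{E}[I^W(f)^4]$, after which collecting terms produces
\[
\kappa_4\big(I^W(f)\big) = \sum_{p=1}^{n-1} c_{n,p}\,\big\|f\overset{p}{\frown}f\big\|_{L^2}^2 + (\text{further nonnegative symmetrization terms}),
\]
with strictly positive combinatorial constants $c_{n,p}$.

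Because every summand is nonnegative and the normalization $n!\|f_k\|_2\to1$ forces $\mathbb{E}[I^W(f_k)^2]\to1$ (so that ``fourth moment $\to 3$'' is exactly ``$\kappa_4\to0$''), condition $(1)$ holds if and only if $\big\|f_k\overset{p}{\frown}f_k\big\|_{L^2}\to0$ for each $p=1,\dots,n-1$. That is precisely condition $(2)$, giving $(1)\Leftrightarrow(2)$; the only auxiliary point is that vanishing of the unsymmetrized contractions forces vanishing of the symmetrized ones, which is immediate from $\big\|f\,\widetilde{\overset{p}{\frown}}\,f\big\|\le\big\|f\overset{p}{\frown}f\big\|$. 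For $(3)\Rightarrow(1)$ I would invoke hypercontractivity: the variables $I^W(f_k)$ all lie in the fixed chaos $\Gamma^n$, on which every $L^r$ norm is equivalent to the $L^2$ norm with constants depending only on $n$ and $r$. Hence $\sup_k\mathbb{E}[I^W(f_k)^8]<\infty$, the family $\{I^W(f_k)^4\}$ is uniformly integrable, and convergence in distribution to a standard Gaussian $N$ upgrades to $\mathbb{E}[I^W(f_k)^4]\to\mathbb{E}[N^4]=3$.

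The main obstacle is the deepest direction $(1)\Rightarrow(3)$: upgrading control of four scalar moments to control of the entire law. For this I would use the Malliavin--Stein method. Writing $F_k=I^W(f_k)$, Stein's method combined with integration by parts on Wiener space gives
\[
d_{TV}(F_k,N)\le 2\,\mathbb{E}\big|1-\langle DF_k,\,-DL^{-1}F_k\rangle_H\big|,
\]
where $D$ is the Malliavin derivative and $L^{-1}$ the pseudo-inverse of the Ornstein--Uhlenbeck generator. Since $F_k$ lives in the $n$th chaos one has $-DL^{-1}F_k=\tfrac1n DF_k$, so the right-hand side is controlled by $\mathrm{Var}\big(\tfrac1n\|DF_k\|_H^2\big)$. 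The crux is a second application of the multiplication formula showing
\[
\mathrm{Var}\Big(\tfrac1n\|DF_k\|_H^2\Big)\le \frac{n-1}{3n}\,\kappa_4(F_k)\xrightarrow[k\to\infty]{}0,
\]
which expresses this variance as a nonnegative combination of the very same contraction norms $\big\|f_k\overset{p}{\frown}f_k\big\|^2$ appearing in $\kappa_4$. Consequently $d_{TV}(F_k,N)\to0$, yielding $(3)$ and closing the equivalences. The heart of the argument is this variance estimate; everything else is bookkeeping with contractions. (The original argument instead replaces Malliavin--Stein with the Dambis--Dubins--Schwarz time change, showing that the quadratic variation of the martingale associated to $I^W(f_k)$ converges to a deterministic limit, but the Malliavin--Stein route is cleaner and furnishes an explicit rate.)
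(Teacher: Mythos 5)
This statement is the classical Nualart--Peccati theorem, which the paper only cites (as Theorem \ref{thm.NuPec}) and does not prove; there is therefore no in-paper proof to compare against. Your proposal is, however, a correct outline of the now-standard argument, and the references it implicitly draws on (\cite{NuPec2005} for the original proof, \cite{NoPec-Book} for the Malliavin--Stein route) are the right ones. The key identity you state is accurate: expanding $I^W(f)^2$ by the product formula and computing $\mathbb{E}[I^W(f)^4]$ as a sum over chaoses yields
\[
\mathbb{E}[I^W(f)^4]-3\big(n!\|f\|_2^2\big)^2=(n!)^2\sum_{p=1}^{n-1}\binom{n}{p}^2\big\|f\overset{p}{\frown}f\big\|_2^2+\sum_{p=1}^{n-1}(p!)^2\binom{n}{p}^4(2n-2p)!\,\big\|\widetilde{f\overset{p}{\frown}f}\big\|_2^2,
\]
so the fourth cumulant controls the \emph{unsymmetrized} contractions directly, and $(1)\Leftrightarrow(2)$ follows as you say (the inequality $\|\widetilde{f\overset{p}{\frown}f}\|\le\|f\overset{p}{\frown}f\|$ handles the symmetrized terms). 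Your $(3)\Rightarrow(1)$ via hypercontractivity and uniform integrability, and $(1)\Rightarrow(3)$ via the Stein bound $d_{TV}(F,N)\le 2\,\mathbb{E}|1-\tfrac1n\|DF\|_H^2|$ together with $\mathrm{Var}(\tfrac1n\|DF\|_H^2)\le\tfrac{n-1}{3n}\kappa_4(F)$, are both standard and correct (the only unspoken step is $\mathbb{E}[\tfrac1n\|DF_k\|_H^2]=\mathbb{E}[F_k^2]\to1$, which the normalization supplies). It is worth noting that the structure of your argument is precisely the one this paper mimics in its own setting: Lemma \ref{keylemma} is the free analogue of your cumulant decomposition into nonnegative contraction norms, while the Haagerup inequality plays the role of hypercontractivity and the multidimensional fourth moment theorem of \cite{MR3380342} replaces the Malliavin--Stein step.
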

Of course, convergence in distribution to a standard Gaussian implies convergence of the fourth moment to $3$, the fourth moment of a standard Gaussian.  That the converge holds true in a fixed order chaos was a remarkable realization.  This result has been generalized substantially to include other equivalent conditions linking to Malliavin calculus and Stein's method, giving tight quantitative bounds on the rate of convergence, and also allowing similar results for Poisson and other distributional limit theorems; see \cite{NoPec-Book,Goldstein-Book} and the references therein.

Meanwhile, connecting with the development of noncommutative probability theories (in the 1980s and 1990s), the Wiener--It\^o chaos expansion can be realized in terms of Boson Fock spaces from quantum theory.  Given a Hilbert space $H$, the Boson Fock space $\mathcal{F}_+(H)$ is the completion of the symmetric tensor algebra
\[ \mathcal{F}_+(H) = \mathbb{C}\Omega\oplus\bigoplus_{n=1}^\infty H^{\odot n} \]
in the symmetrized tensor product norm from $H$ boosted to each $H^{\odot n}$ (which makes these different ``particle spaces'' orthogonal); here $\Omega$ is a unit length vector separate from $H$ called the ``vacuum'', and $\odot$ is the symmetric tensor product.  For each $h\in H$, the creation operator $c(h)$ is a densely-defined adjointable unbounded operator on $\mathcal{F}_+(H)$ given by $c(h)\psi = h\odot\psi$.  The selfadjoint ``field operators $\omega(h) = c(h)+c(h)^\ast$ have Gaussian distribution (with mean $0$ and variance $\|h\|_H^2$) in terms of the vacuum expectation state $\tau_+(X) = \langle X\Omega,\Omega\rangle_{\mathcal{F}_+(H)}$.  More generally, Hudson and Parthasarathy showed \cite{HP1984} that, taking $H=L^2(\mathbb{R}_+)$, the noncommutative stochastic process $W_t = \omega(\mathbbm{1}_{[0,t]})$ has the same finite-dimensional distributions as standard Brownian motion.  Moreover, using this Brownian motion to construct Wiener--It\^o integrals $I^W$ (therefore valued in the unbounded operators on $\mathcal{F}_+(H)$ with $H=L^2(\mathbb{R}_+)$), one finds the identity
\begin{equation}\label{eq.Parth.id} I^W(h_1\odot\cdots\odot h_n)\Omega = h_1\odot\cdots\odot h_n. \end{equation}
This sets up an isomorphism between the Wiener--It\^o chaos of order $n$ and the $n$-particle space in $\mathcal{F}_+(L^2(\mathbb{R}_+))$.  We take this perspective in much of this paper, working directly in Fock spaces.

The Boson Fock space was originally constructed as an arena to verify the canonical commutation relations (CCR) between the creation and annihilation operators: if $H$ is separable and $\{e_i\}_{i\in\N}$ is an orthonormal basis for $H$, the operators $c_i = c(e_i)$ satisfy $c_i^\ast c_j - c_jc_i^\ast = \delta_{ij}$, commutations relations from quantum field theory associated to the (quantum) statistical physics of Boson states.  Similarly, Fermions are characterized by the canonical {\em anti}commutation relations (CAR), $c_i^\ast c_j + c_jc_i^\ast = \delta_{ij}$, which can be realized on the {\em Fermion} Fock space $\mathcal{F}_-(H)$ (using skew-symmetric tensor products).  Interpolating between these are the {\bf canonical $q$-commutation relations} ($q$-CCR) $c_i^\ast c_j - q c_j c_i^\ast = \delta_{ij}$ for $-1<q<1$.  Introduced and studied (without being rigorously constructed) by Frisch and Bourret starting in 1970 \cite{MR260352}, they were put on a rigorous mathematical footing by Bo\.zejko and Speicher \cite{MR1105428}, constructed on $q$-deformed Fock spaces $\mathcal{F}_q(H)$.  The special case $q=0$ is the Boltzmann Fock space, the completion of the fully non-symmetric tensor algebra over $H$:
\[ \mathcal{F}_0(H) = \mathbb{C}\Omega\oplus\bigoplus_{n=1}^\infty H^{\otimes n}. \]
The $q$-Fock space is a ``twisting'' of the Boltzmann Fock space cf.\ Section \ref{sect.q-Wigner.chaos}.  For $|q|<1$, the creation and annihilation operators are in fact bounded, and irreducible (i.e.\ the von Neumann algebra generated by $\{c_q(h)\colon h\in H\}$ is all of $\mathscr{B}(\mathcal{F}_q(H))$, cf.\ \cite{Kemp2005}).  The von Neumann algebra generated by the field operators $\omega_q(e_i) = c_q(e_i)+c_q(e_i)^\ast$, denoted $\Gamma_q(H)$, is a tracial von Neumann factor, where the unique tracial state is the vacuum expectation $\tau_q(X) = \langle X\Omega,\Omega\rangle_{\mathcal{F}_q(H)}$.  These are the {\em $q$-Gaussian factors}.  The special case $q=0$ corresponds to the realm of free probability.

In general, for $|q|<1$, the map from $\Gamma_q(H)$ into $\mathcal{F}_q(H)$ defined by $X\mapsto X\Omega$ is an isometry (by definition of the state $\tau_q$) and is therefore one-to-one; in fact it extends to an isomorphism $L^2(\Gamma_q(H),\tau_q)\to \mathcal{F}_q(H)$.  In view of \eqref{eq.Parth.id}, we refer to this inverse map as the {\bf $q$-Wigner stochastic integral $I_q$}.  The image $I_q(H^{\otimes n})$ of the $n$-particle space in $\mathcal{F}_q(H)$ under this map is an operator subspace $\Gamma_n^q(H)$ of $\Gamma_q(H)$, which is identified as the $q$-Wigner chaos of order $n$.  It is possible to construct the map $I_q$ directly as a kind of stochastic integral with respect to the $q$-Brownian motion process in $\Gamma_q(L^2(\mathbb{R}_+))$, cf.\ \cite{MR1952458}.  In the special case $q=0$, a very rich theory of free stochastic integration has been developed, for example in \cite{MR1660906}.  See sections \ref{sect.Wigner.chaos} and \ref{sect.q-Wigner.chaos} for details.

Returning to the Fourth Moment Theorem: in 2012, the first author together with Peccati, Nourdin, and Speicher proved a complete analog of Theorem \ref{thm.NuPec} in the free probability setting.

\begin{thm}[K, Nourdin, Peccati, Speicher, \cite{1529a405-f4b1-33eb-aeac-05c9f76e60d3}] \label{thm.KNPS} Let $f_k$ be a sequence of mirror symmetric functions in $L^2(\mathbb{R}^n_+)$ (i.e.\ $f(t_1,\ldots,t_n) = \overline{f(t_n,\ldots,t_1)}$) with $\|f_k\|_2\to 1$ as $k\to\infty$.  Then the following three conditions are equivalent.
\begin{enumerate}
    \item $\displaystyle{\lim_{k\to\infty}\tau_0[I_0(f_k)^4] = 2}$.
    \item $\displaystyle{\lim_{k\to\infty} f_k \overset{p}{\frown} f_k = 0}$ in $L^2(\mathbb{R}_+^{2(n-p)})$ for $p=1,\ldots,n-1$.
    \item The sequence $(I_0(f_k))_{k\in\N}$ converges in distribution to a standard semicircular random variable.
\end{enumerate}
\end{thm}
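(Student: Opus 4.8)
The plan is to establish the cycle by proving the three implications (3)$\Rightarrow$(1)$\Rightarrow$(2)$\Rightarrow$(3), exploiting the fact that in the free (that is, $q=0$) setting the relevant algebra is dramatically simpler than in the classical Wiener case. The implication (3)$\Rightarrow$(1) is essentially free: since each $I_0(f_k)$ lies in a fixed order of chaos, the operator-norm bound $\|I_0(g)\|\le (n+1)\|g\|_2$ (valid for any kernel $g$ of order $n$) shows that all moments of $I_0(f_k)$ are uniformly bounded, so convergence in distribution upgrades to convergence of all moments; in particular $\tau_0[I_0(f_k)^4]$ converges to the fourth moment $2$ of the standard semicircular law. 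The real content is in (1)$\Leftrightarrow$(2) and (2)$\Rightarrow$(3).

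For (1)$\Leftrightarrow$(2) I would derive an \emph{exact} formula for the fourth moment. Writing $f=f_k$ and using the Wigner-integral product formula $I_0(f)^2=\sum_{p=0}^{n}I_0(f\overset{p}{\frown}f)$ together with the self-adjointness of $I_0(f)$ (guaranteed by mirror symmetry), one has
\[ \tau_0[I_0(f)^4]=\tau_0\bigl[(I_0(f)^2)^\ast I_0(f)^2\bigr]=\bigl\|I_0(f)^2\Omega\bigr\|^2 . \]
The summands $I_0(f\overset{p}{\frown}f)$ lie in the chaoses of orders $2n,2n-2,\dots,0$, which are mutually orthogonal, so the cross terms vanish and
\[ \tau_0[I_0(f)^4]=\sum_{p=0}^{n}\bigl\|f\overset{p}{\frown}f\bigr\|_2^2 = 2\|f\|_2^4+\sum_{p=1}^{n-1}\bigl\|f\overset{p}{\frown}f\bigr\|_2^2, \]
where the terms $p=0$ and $p=n$ each contribute $\|f\|_2^4$ (the former because $\|f\otimes f\|_2=\|f\|_2^2$, the latter because for mirror symmetric $f$ the full contraction equals the scalar $\|f\|_2^2$). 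Since $\|f_k\|_2\to1$, the quantity $2\|f_k\|_2^4\to2$, and the remaining sum is a sum of nonnegative terms; hence $\tau_0[I_0(f_k)^4]\to2$ if and only if every $\|f_k\overset{p}{\frown}f_k\|_2\to0$ for $p=1,\dots,n-1$. This is the free ``miracle'': the fourth moment formula is an exact identity (not merely an inequality, and with no combinatorial weights), which makes (1)$\Leftrightarrow$(2) immediate.

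For (2)$\Rightarrow$(3) I would use the method of moments, proving that $\tau_0[I_0(f_k)^m]$ converges to the $m$th moment of the standard semicircular distribution (the Catalan number $C_{m/2}$ for $m$ even, and $0$ for $m$ odd) for every $m$. Iterating the product formula and taking the trace, $\tau_0[I_0(f_k)^m]$ expands as a sum of scalars indexed by ways of successively contracting the $m$ factors. The contributions that are \emph{full} (contraction $\overset{n}{\frown}$ at every stage) reproduce exactly the non-crossing pair-partition count, giving the main term $C_{m/2}\|f_k\|_2^m\to C_{m/2}$ (and $0$ when $m$ is odd). Every other contribution uses at least one partial contraction $\overset{p}{\frown}$ with $1\le p\le n-1$ at some stage. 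Such a term is controlled by Cauchy--Schwarz in $L^2(\tau_0)$ together with the operator-norm bound $\|I_0(f_k)\|\le(n+1)\|f_k\|_2$, which keeps the remaining factors uniformly bounded, so that the whole term is dominated by a constant (depending on $n$ and $m$ only) times a contraction norm that tends to $0$ by hypothesis.

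The main obstacle is precisely the bookkeeping of these error terms: the $p=0$ branch of the product formula produces the tensor kernel $f_k\otimes f_k$ of order $2n$, and subsequent contractions generate \emph{nested} (iterated) contractions rather than the first-level objects $f_k\overset{p}{\frown}f_k$ appearing in condition (2). The technical heart of the argument is therefore a contraction-estimate lemma showing that the $L^2$ norm of any such nested contraction is bounded, via repeated Cauchy--Schwarz, by a product involving at least one first-level norm $\|f_k\overset{p}{\frown}f_k\|_2$ with $1\le p\le n-1$ (times factors bounded by $\|f_k\|_2$). Granting this lemma, every non-semicircular contribution vanishes in the limit, the moments converge to those of the semicircular law, and (since that law is determined by its moments) $I_0(f_k)$ converges in distribution to a standard semicircular, completing the cycle.
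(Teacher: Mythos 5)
First, note that this theorem is quoted from \cite{1529a405-f4b1-33eb-aeac-05c9f76e60d3} and is not proved in the present paper; the closest internal comparison is the machinery of Lemma \ref{keylemma} and Theorem \ref{fourthmoment}. Your implications (3)$\Rightarrow$(1) and (1)$\Leftrightarrow$(2) are correct and complete, and they coincide with what the paper does in its own two-chaos setting: uniform operator-norm bounds via the Haagerup inequality upgrade convergence in distribution to convergence of moments, and the exact identity $\tau_0[I_0(f)^4]=\sum_{p=0}^{n}\|f\overset{p}{\frown}f\|_2^2=2\|f\|_2^4+\sum_{p=1}^{n-1}\|f\overset{p}{\frown}f\|_2^2$ (using mirror symmetry to evaluate the $p=0$ and $p=n$ terms) is precisely the computation appearing in the proof of Lemma \ref{keylemma}. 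For (2)$\Rightarrow$(3) you follow the original route of \cite{1529a405-f4b1-33eb-aeac-05c9f76e60d3} (method of moments, non-crossing/Dyck-path terms giving the Catalan numbers, all other terms killed by vanishing contractions), whereas the present paper avoids this entirely by invoking the multivariate fourth moment theorem of \cite{MR3380342} as a black box. Your route is more self-contained in principle; the paper's route is shorter and also delivers the asymptotic freeness needed for the two-chaos statement.

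The one genuine gap is the step you yourself flag: the claim that every non-Dyck term in the iterated product expansion of $\tau_0[I_0(f_k)^m]$ is dominated by a constant times some first-level contraction norm $\|f_k\overset{p}{\frown}f_k\|_2$ with $1\le p\le n-1$. This is not a routine Cauchy--Schwarz bookkeeping exercise: the error terms involve nested contractions of the form $((f\overset{p_1}{\frown}f)\overset{p_2}{\frown}f)\cdots$, and reducing their $L^2$ norms to first-level contractions of $f$ with itself is the content of the main combinatorial lemma of \cite{1529a405-f4b1-33eb-aeac-05c9f76e60d3} (a statement about respectful non-crossing pairings of $[nm]$ that do not descend to non-crossing pairings of the $m$ blocks), proved there by a careful induction. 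As written, your proposal asserts this lemma rather than proving it, so (2)$\Rightarrow$(3) is an outline rather than a proof. If you do not wish to reproduce that induction, the cleanest repair is to do what the paper does for Theorem \ref{fourthmoment}: cite \cite{MR3380342} (or the original lemma in \cite{1529a405-f4b1-33eb-aeac-05c9f76e60d3}) for the implication from vanishing contractions to semicircular convergence.
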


Shortly after, Deya, Norredine, and Nourdin partly generalized these results to the case $q\in[0,1]$.  Interestingly, the limit distribution for general $q$ is not the same for all orders of chaos.
\begin{thm}[Deya, Norredine, Nourdin, \cite{MR3089666}] \label{thm.DNN} Let $f_k$ be a sequence of {\bf fully} symmetric functions in $L^2(\mathbb{R}^n_+)$ with $\|f_k\|_2\to 1$ as $k\to\infty$.  Then the following three conditions are equivalent.
\begin{enumerate}
    \item $\displaystyle{\lim_{k\to\infty}\tau_q[I_q(f_k)^4] = 2+q^{n^2}}$.
    \item $\displaystyle{\lim_{k\to\infty} f_k \overset{p}{\frown} f_k = 0}$ in $L^2(\mathbb{R}_+^{2(n-p)})$ for $p=1,\ldots,n-1$.
    \item The sequence $(I_q(f_k))_{k\in\N}$ converges in $\ast$-distribution to a $q^{n^2}$-Gaussian random variable.
\end{enumerate}
\end{thm}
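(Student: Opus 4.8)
The plan is to prove the cycle of implications $(3)\Rightarrow(1)\Rightarrow(2)\Rightarrow(3)$, extracting the equivalence $(1)\Leftrightarrow(2)$ from a single exact expansion of the fourth moment. Throughout, the essential tool is the isometry $\|X\|_{L^2(\tau_q)}=\|X\Omega\|_{\mathcal{F}_q}$ together with the $q$-Wick formula, which computes any moment
\[
\tau_q\big[I_q(f_1)\cdots I_q(f_m)\big]=\sum_{\pi}q^{\mathrm{cr}(\pi)}\,\langle\,\cdot\,\rangle_\pi,
\]
the sum running over pair partitions $\pi$ of the $\sum_i n_i$ tensor legs that contain \emph{no} pair internal to a single factor $I_q(f_i)$ (self-contractions vanish because each integral is genuinely $n$-dimensional), weighted by $q$ to the number of crossings $\mathrm{cr}(\pi)$, with each pair effecting the corresponding contraction of the kernels. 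Since $|q|<1$, the $q$-deformed inner product is equivalent to the ordinary $L^2$ inner product on each particle space, so the contraction norms below may be measured in either norm interchangeably, and the stated normalization fixes $\tau_q[I_q(f_k)^2]\to1$.

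For $(3)\Rightarrow(1)$, the operators $I_q(f_k)$ are bounded with operator norms controlled by $\|f_k\|$, so the uniformly bounded sequence converges in moments; applying the $q$-Wick formula to the three pair partitions of four points---two non-crossing (weight $1$) and one crossing (weight $q^{n^2}$)---gives the fourth moment $2+q^{n^2}$ of a standard $q^{n^2}$-Gaussian. For $(1)\Leftrightarrow(2)$, I would expand $\tau_q[I_q(f_k)^4]$ via the $q$-Wick formula with four copies $A,B,C,D$ of $f_k$, each carrying $n$ legs, and organize the admissible pairings by the numbers $a_{XY}$ of legs joining each pair of copies. The three ``block'' pairings---$A$ with $B$ and $C$ with $D$; $A$ with $D$ and $B$ with $C$; and $A$ with $C$ and $B$ with $D$---produce the Gaussian value: the first two can be drawn with no inter-block crossing and contribute the second-moment normalization with weight $1$, while the crossing configuration forces all $n$ strands from $A$ to $C$ to cross all $n$ strands from $B$ to $D$, contributing $q^{n\cdot n}=q^{n^2}$. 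This is exactly where the exponent $n^2$ originates. Every remaining pairing splits some copy among two or more partners and hence factors through a partial contraction $f_k\overset{p}{\frown}f_k$ with $1\le p\le n-1$, whose contribution is dominated by Cauchy--Schwarz by a nonnegative multiple of $\|f_k\overset{p}{\frown}f_k\|_2^2$. One thereby obtains an identity of the schematic form
\[
\tau_q\big[I_q(f_k)^4\big]=(2+q^{n^2})\,\tau_q\big[I_q(f_k)^2\big]^2+\sum_{p=1}^{n-1}c_{p}(q,n)\,\big\|f_k\overset{p}{\frown}f_k\big\|_2^2
\]
with coefficients $c_p(q,n)\ge 0$, so that, given $\tau_q[I_q(f_k)^2]\to1$, the fourth moment tends to $2+q^{n^2}$ precisely when every contraction norm vanishes, i.e.\ $(2)$.

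For $(2)\Rightarrow(3)$, since each $I_q(f_k)$ is self-adjoint it suffices to show $\tau_q[I_q(f_k)^m]$ converges to the $m$th moment of a $q^{n^2}$-Gaussian for every $m$. I would classify the admissible pairings of the $mn$ legs by their reduced diagram on the $m$ copies. A pairing whose reduced diagram is itself a pair partition $\bar\pi\in\mathcal{P}_2(m)$---each copy contracted in full against exactly one partner---survives; summing the internal leg-pairings and their crossing weights, each such class contributes the second-moment normalization times $(q^{n^2})^{\mathrm{cr}(\bar\pi)}$, and the sum over $\bar\pi$ reproduces precisely the $q^{n^2}$-Wick formula defining the limiting moment. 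Any other pairing splits some copy among several partners and hence factors through at least one nontrivial contraction $f_k\overset{p}{\frown}f_k$, $1\le p\le n-1$; iterated Cauchy--Schwarz on the Fock space (using $|q|<1$ to bound the $q$-symmetrization) controls its contribution by a product of contraction norms times powers of $\|f_k\|_2$, which vanishes by $(2)$.

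The main obstacle is the combinatorial bookkeeping in $(2)\Rightarrow(3)$: verifying that the accumulated crossing exponent of each surviving class telescopes to exactly $(q^{n^2})^{\mathrm{cr}(\bar\pi)}$ and faithfully reproduces the $q$-Gaussian weights, and---more delicately---proving that \emph{every} non-surviving pairing genuinely exposes a contraction operator $f_k\overset{p}{\frown}f_k$ with $1\le p\le n-1$, so that the estimate from $(2)$ applies uniformly in $m$. Full symmetry of the kernels is essential throughout: it is what collapses the many internal leg-pairings within each block into clean normalization constants, and what forces the non-block pairings to display a genuine partial contraction.
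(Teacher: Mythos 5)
First, a point of orientation: the paper does not prove Theorem \ref{thm.DNN} at all --- it is quoted verbatim from Deya--Noreddine--Nourdin \cite{MR3089666} as background, and later (in Section \ref{q-case}) the relevant pieces are invoked as black boxes (``By Proposition 3.2 in \cite{MR3089666}\dots'', ``by Theorem 3.1 in \cite{MR3089666}''). So there is no in-paper proof to compare against. That said, your sketch does follow the strategy of the cited original: $(3)\Rightarrow(1)$ from uniform boundedness plus the $q$-Wick formula, $(1)\Leftrightarrow(2)$ from an exact expansion of the fourth moment into $(2+q^{n^2})$ times the squared variance plus a combination of squared contraction norms, and $(2)\Rightarrow(3)$ by classifying pairings of $mn$ legs according to their reduced diagram on the $m$ blocks.

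There are, however, two genuine gaps. The first is in $(1)\Rightarrow(2)$: you assert an identity with coefficients $c_p(q,n)\ge 0$, but (i) nonnegativity is not enough --- to conclude that each $\|f_k\overset{p}{\frown}f_k\|_2\to 0$ from the vanishing of the sum you need $c_p(q,n)>0$ for every $p$, uniformly in $k$ --- and (ii) the mechanism you offer for it, ``dominated by Cauchy--Schwarz,'' is the wrong one: an upper bound on the remainder terms gives nothing in this direction. What is actually needed, and what constitutes the real content of the theorem, is that each admissible non-block pairing contributes \emph{exactly} $q^{\mathrm{cr}(\pi)}\,\|f_k\overset{p}{\frown}f_k\|_2^2$ for some $1\le p\le n-1$; this uses full symmetry to turn the a priori inner products $\langle\,\cdot\,,\pi(\cdot)\,\rangle$ into squared norms (the same manipulation this paper carries out for the mixed terms in Lemma \ref{q-keylemma}), and it uses $q\ge 0$ to keep the weights nonnegative, with the $q=0$ (non-crossing) pairings supplying the strictly positive part of each $c_p$. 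This is precisely where the hypothesis $q\in[0,1)$ and full symmetry enter, and why the theorem fails at $q=-\tfrac12$; a proof that does not isolate this step has not engaged with the theorem. The second gap you name yourself: the $(2)\Rightarrow(3)$ bookkeeping --- that every surviving class contributes weight $(q^{n^2})^{\mathrm{cr}(\bar\pi)}$ and that every non-surviving pairing of the $mn$ legs exposes a nontrivial contraction controllable by $(2)$ --- is acknowledged as ``the main obstacle'' but not carried out, and it is where most of the labor in \cite{MR3089666} lies. As it stands your text is a correct road map of the known proof rather than a proof.
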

Note that, as in the $q=0$ case proved in \cite{1529a405-f4b1-33eb-aeac-05c9f76e60d3}, the most natural general assumption on the integral kernels $f_k$ is mirror symmetry: this is precisely equivalent to the stochastic integral $I_q(f_k)$ being selfadjoint.  At this time, it remains unknown whether Theorem \ref{thm.DNN} holds for mirror symmetric functions, not just the more restrictive class of fully symmetric functions.  Moreover, it remains unclear if the theorem can be generalized to $q<0$ (although \cite{MR3089666} gives a counterexample with $q=-\frac12$, it is unknown whether there are counterexamples for any other $q<0$).

All of the above results are for ($q$-)Gaussian approximation in a {\bf single} order of chaos.  A more general and natural question is what happens for random variables with {\em finite} chaos expansions: i.e.\ in a linear combination of chaos of different orders.  Remarkably, this question was not addressed at all until the recent paper \cite{basseoconnor2025fourthmoment} considered the question of a combination of {\em two} orders of chaos.  Their main result was that the fourth moment theorem holds in essentially the same manner, provided the two orders of chaos have opposite parity; otherwise, the theorem fails.  To properly state their result, we reframe condition (1) from the above theorems in terms of the fourth {\em cumulant} rather than the fourth {\em moment}.  For a centered random variable $X$, its fourth (classical) cumulant $c_4$ is
\[ c_4(X) = \mathbb{E}[X^4]-3(\mathbb{E}[X^2])^2. \]

\begin{thm}[Basse-O'Connor, Kramer-Bang, Svedsen, \cite{basseoconnor2025fourthmoment}] \label{thm.BOKBS25}
Let $m,n \in\N$ have opposite parities.  For each $k\in\N$, let $f_k \in L^2(\mathbb{R}^m_+)$ and $g_k \in L^2(\mathbb{R}^n_+)$ be fully symmetric functions, and let $X_k = I^W(f_k)$ and $Y_k=I^W(g_k)$.  Assume $\mathbb{E}[X_k^2] + \mathbb{E}[Y_k^2] = \sigma^2$ for all $k\in\N$.  Then the following three conditions are equivalent.
\begin{enumerate}
    \item $\displaystyle{\lim_{k\to\infty} c_4[X_k+Y_k] = 0}$.
    \item $\displaystyle{\lim_{k\to\infty} f_k \overset{p}{\frown} f_k = 0}$ in $L^2(\mathbb{R}_+^{2(m-p)})$ for $p=1,\ldots,m-1$,  and $\displaystyle{\lim_{k\to\infty} g_k \overset{p}{\frown} g_k = 0}$ in $L^2(\mathbb{R}_+^{2(n-p)})$ for $p=1,\ldots,n-1$.
    \item The sequence $(X_k+Y_k)_{k\in\N}$ converges in distribution to a centered Gaussian random variable with variance $\sigma^2$.
\end{enumerate}
\end{thm}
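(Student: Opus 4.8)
The plan is to reduce everything to a single algebraic identity for the fourth cumulant of the sum, in which the opposite parity of $m$ and $n$ does all the work. Write $X=X_k=I^W(f_k)$ and $Y=Y_k=I^W(g_k)$, suppressing $k$. First I would record the structural consequence of opposite parity: by the product formula the chaos expansions of $X^2$ and of $Y^2$ live entirely in even orders, while the cross term $XY$ lives entirely in odd orders (its expansion runs through $m+n,m+n-2,\ldots,|m-n|$, all odd since $m+n$ is odd). In particular $XY$ has no $0$th-chaos component, so $\mathbb{E}[XY]=0$ and $\mathbb{E}[(X+Y)^2]=\mathbb{E}[X^2]+\mathbb{E}[Y^2]=\sigma^2$; moreover the even part of $(X+Y)^2-\mathbb{E}[(X+Y)^2]$ is $(X^2-\mathbb{E}X^2)+(Y^2-\mathbb{E}Y^2)$, its odd part is $2XY$, and these are $L^2$-orthogonal.

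Next, expanding by multilinearity of joint cumulants gives $c_4[X+Y]=c_4[X]+4\kappa(X,X,X,Y)+6\kappa(X,X,Y,Y)+4\kappa(X,Y,Y,Y)+c_4[Y]$. Since a mixed moment of chaoses vanishes unless the total number of integrator legs is even, the two cumulants $\kappa(X,X,X,Y)$ and $\kappa(X,Y,Y,Y)$ vanish identically (their leg totals $3m+n$ and $m+3n$ are odd); this is the parity input, and it fails precisely when $m,n$ share a parity. Combining the orthogonal even/odd splitting above with $\mathbb{E}[XY]=0$, a short computation yields the clean identity $c_4[X+Y]=c_4[X]+c_4[Y]+6\bigl(\mathbb{E}[X^2Y^2]-\mathbb{E}[X^2]\,\mathbb{E}[Y^2]\bigr)$, so that the entire mixed contribution is $6\,\kappa(X,X,Y,Y)=6\,\mathrm{Cov}(X^2,Y^2)$.

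The heart of the argument --- and the step I expect to be the main obstacle --- is to show that this mixed term is a nonnegative combination of squared norms of mixed contractions. Using $\mathbb{E}[X^2Y^2]=\|XY\|_2^2$ (valid since $X,Y$ commute and are selfadjoint) together with the chaos expansion of $XY$, one gets $\mathbb{E}[X^2Y^2]=\sum_{t=0}^{m\wedge n}\gamma_t\,\|\widetilde{f\otimes_t g}\|_2^2$ with explicit $\gamma_t\ge0$. The claim is that after fully symmetrizing, the $\|f\|_2^2\|g\|_2^2$-proportional content buried in these norms cancels $\mathbb{E}[X^2]\mathbb{E}[Y^2]$ exactly, leaving $\kappa(X,X,Y,Y)=\sum_{t=1}^{m\wedge n}\beta_t\,\|\widetilde{f\otimes_t g}\|_2^2$ with $\beta_t\ge0$. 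Verifying this cancellation and the positivity of the surviving coefficients is the polarization identity advertised in the abstract; it requires the diagram/contraction combinatorics for products of four integrals, and it is exactly here that opposite parity is essential (the obstructing constant $\mathbb{E}[X^2]\mathbb{E}[Y^2]$ is absorbed only because the cross chaos is purely odd). Granting this, $(1)\Leftrightarrow(2)$ follows: all three summands $c_4[X],c_4[Y],\kappa(X,X,Y,Y)$ are nonnegative, so $c_4[X_k+Y_k]\to0$ forces $c_4[X_k]\to0$ and $c_4[Y_k]\to0$, whence the single-chaos Fourth Moment Theorem (Theorem \ref{thm.NuPec}) applied separately to $f_k$ and $g_k$ yields (2); conversely, if (2) holds then $c_4[X_k],c_4[Y_k]\to0$, and writing $\kappa(X,X,Y,Y)=\langle X^2-\mathbb{E}X^2,\ Y^2-\mathbb{E}Y^2\rangle$ and applying Cauchy--Schwarz order-by-order bounds $\kappa$ by products of self-contraction norms (the single boundary term pairs the bounded factor $\|f_k\|_2^2$ with a vanishing $g$-contraction), so $\kappa\to0$ and $c_4[X_k+Y_k]\to0$.

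Finally I would close the cycle through (3). For $(2)\Rightarrow(3)$, the multivariate Fourth Moment Theorem of Peccati--Tudor shows that vanishing self-contractions force $(X_k,Y_k)$ to converge to a jointly Gaussian pair; since $\mathbb{E}[X_kY_k]=0$ the limiting components are uncorrelated, hence independent, so along any subsequence on which $\mathbb{E}[X_k^2]$ and $\mathbb{E}[Y_k^2]$ converge the sum $X_k+Y_k$ converges to $N(0,\sigma^2)$, and as this limit is subsequence-independent the full sequence converges. The implication $(3)\Rightarrow(1)$ is routine: $X_k+Y_k$ lies in a fixed finite sum of chaoses, so Nelson hypercontractivity bounds all its $L^p$ norms by its $L^2$ norm $\sigma$, giving uniform integrability of $(X_k+Y_k)^4$; convergence in distribution then upgrades to convergence of the fourth cumulant to that of $N(0,\sigma^2)$, namely $0$. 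The only genuinely new ingredient beyond the classical single-chaos theory is the polarization identity of the third paragraph, and establishing it cleanly for all $m\neq n$ of opposite parity is where the main effort lies.
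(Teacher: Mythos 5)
First, a point of reference: the paper does not actually prove Theorem \ref{thm.BOKBS25} --- it is imported from \cite{basseoconnor2025fourthmoment} as motivation, and only the free and $q$-deformed analogs are proved here (Lemma \ref{keylemma}, Theorem \ref{fourthmoment}, Theorem \ref{thm.2}). So the fair comparison is with the announced strategy behind \eqref{eq.c4.id} and with the structurally parallel free-case argument of Section \ref{0-case}. Measured against those, your blueprint is the right one and is correct in outline: the multilinear cumulant expansion with the odd-leg cumulants $\kappa(X,X,X,Y)$, $\kappa(X,Y,Y,Y)$ killed by parity reproduces \eqref{eq.c4.id}; nonnegativity of all three summands gives $(1)\Rightarrow(2)$; Peccati--Tudor plus the subsequence trick (exactly how the paper handles the possibly non-convergent individual variances in Theorem \ref{fourthmoment}) gives $(2)\Rightarrow(3)$; and hypercontractivity gives $(3)\Rightarrow(1)$. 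Your order-by-order Cauchy--Schwarz argument for $(2)\Rightarrow(1)$ is correct but redundant once the cycle $(1)\Rightarrow(2)\Rightarrow(3)\Rightarrow(1)$ closes. One cosmetic caveat: for $(1)\Rightarrow(2)$ you cite Theorem \ref{thm.NuPec}, which is stated under the normalization $m!\|f_k\|_2\to 1$; what you actually need is the unnormalized identity expressing $c_4[I^W(f_k)]$ as a positive combination of $\|f_k\overset{p}{\frown}f_k\|^2$, $1\le p\le m-1$ (the classical counterpart of the computation inside Lemma \ref{keylemma}), which holds without any normalization.

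The step you leave as ``granting this'' is indeed the key lemma, but the mechanism you describe is exactly right and is a standard computation, so the proposal is completable as written: with $\widetilde{h}$ denoting symmetrization, the product formula gives
\[ \mathbb{E}[X^2Y^2]=\|XY\|_2^2=\sum_{t=0}^{m\wedge n}(t!)^2\binom{m}{t}^2\binom{n}{t}^2(m+n-2t)!\,\bigl\|\widetilde{f\overset{t}{\frown}g}\bigr\|^2, \]
and the $t=0$ term satisfies $(m+n)!\,\|\widetilde{f\otimes g}\|^2=m!\,n!\sum_{r\ge 0}\binom{m}{r}\binom{n}{r}\|f\overset{r}{\frown}g\|^2$, whose $r=0$ summand is exactly $\mathbb{E}[X^2]\,\mathbb{E}[Y^2]$. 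Hence $\mathrm{Cov}(X^2,Y^2)$ is a nonnegative combination of mixed contraction norms, as claimed. However, your parenthetical locating the role of opposite parity inside this positivity step is misplaced: the absorption of $\mathbb{E}[X^2]\mathbb{E}[Y^2]$ and the bound $\mathrm{Cov}(X^2,Y^2)\ge 0$ hold for \emph{all} $m,n$ and have nothing to do with parity. Parity enters only where you first invoked it, in annihilating $\kappa(X,X,X,Y)$ and $\kappa(X,Y,Y,Y)$ (e.g.\ for $m=1$, $n=3$ one has $3m+n$ even and $\mathbb{E}[X^3Y]\neq 0$ in general), and that is precisely where the theorem fails for equal parities. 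This is worth stating correctly, since the analogous issue is the entire point of the polarization identities \eqref{eq.c4.id} and \eqref{eq.k4.id}.
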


Since $c_4[X_k+Y_k] = \mathbb{E}[(X_k+Y_k)^4]-3(\mathbb{E}[(X_k+Y_k)^2])^2$, and since $\mathbb{E}[(X_k+Y_k)^2] = \mathbb{E}[X_k^2]+\mathbb{E}[Y_k^2] = \sigma^2$ (because $X_k,Y_k$ are in different orders of chaos and are therefore orthogonal in $L^2$), condition (1) above is equivalent to the condition in the preceding theorems scaled by variance: the assumption is that the fourth moment $\mathbb{E}[(X_k+Y_k)^4]$ converges to $3\sigma^4$, which is the fourth moment of a Gaussian $\mathcal{N}(0,\sigma^2)$ random variable.  Theorem \ref{thm.BOKBS25} is stated in terms of the fourth cumulant because the key innovation of the paper is the realization that, for Winer It\^o integrals $X,Y$ of different parities, the following identity holds:
\begin{equation} \label{eq.c4.id} c_4[X+Y] = c_4[X]+c_4[Y] + 6\mathrm{Cov}[X^2,Y^2]. \end{equation}
Equation \ref{eq.c4.id} and Theorem \ref{thm.1} are proved in Section \ref{0-case}.

\subsection{Main Results}

In this paper, we prove the precise analogs of Theorem \ref{thm.BOKBS25} in free Wigner chaos (as in Theorem \ref{thm.KNPS}) and in $q$-Wigner chaos (as in Theorem \ref{thm.DNN}).  We begin with the free case.  Here the statement is in terms of the fourth {\em free} cumulant $\kappa_4$, defined (for a mean $0$ random variable) as $\kappa_4[X] = \tau[X^4]-2(\tau[X^2])^2$.

\begin{thm} \label{thm.1} 
Let $m,n\in\N$ have opposite parities.  For each $k\in\N$, let $f_k \in L^2(\mathbb{R}^m_+)$ and $g_k \in L^2(\mathbb{R}^n_+)$ be mirror symmetric functions, and let $X_k = I_0(f_k)$ and $Y_k=I_0(g_k)$ be the free Wigner integrals in $\Gamma_0(H)$ where $H=L^2(\mathbb{R}_+)$.  Assume $\tau_0[X_k^2] + \tau_0[Y_k^2] \to \sigma^2$ as $k\to\infty$.  Then the following three conditions are equivalent.
\begin{enumerate}
    \item $\displaystyle{\lim_{k\to\infty} \kappa_4[X_k+Y_k] = 0}$.
    \item $\displaystyle{\lim_{k\to\infty} f_k \overset{p}{\frown} f_k = 0}$ in $L^2(\mathbb{R}_+^{2(m-p)})$ for $p=1,\ldots,m-1$ and $\displaystyle{\lim_{k\to\infty} g_k \overset{p}{\frown} g_k = 0}$ in $L^2(\mathbb{R}_+^{2(n-p)})$ for $p=1,\ldots,n-1$.
    \item The sequence $(X_k+Y_k)_{k\in\N}$ converges in distribution to a centered semicircular random variable with variance $\sigma^2$.
\end{enumerate}
\end{thm}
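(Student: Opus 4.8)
The plan is to reduce the whole statement to the single-chaos theory of Theorem \ref{thm.KNPS} by means of a polarization identity for the free fourth cumulant, which is exactly where the opposite-parity hypothesis enters decisively. Write $Z_k=X_k+Y_k$. Since $X_k,Y_k$ lie in Wigner chaoses of different orders $m\neq n$, they are centered and orthogonal in $L^2(\tau_0)$, so $\tau_0[Z_k^2]=\tau_0[X_k^2]+\tau_0[Y_k^2]\to\sigma^2$. Expanding $\tau_0[Z_k^4]$ into its sixteen words in $X_k,Y_k$, I would first note that any word with three of one letter and one of the other has $3m+n$ or $m+3n$ legs, which is \emph{odd} because $m+n$ is odd; such a vacuum expectation vanishes, since an odd number of legs admits no pairing. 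By the trace property the six surviving two-two words collapse into two cyclic classes, giving
\[ \tau_0[Z_k^4]=\tau_0[X_k^4]+\tau_0[Y_k^4]+4\,\tau_0[X_k^2Y_k^2]+2\,\tau_0[X_kY_kX_kY_k]. \]
Subtracting $2\tau_0[Z_k^2]^2=2(\tau_0[X_k^2]+\tau_0[Y_k^2])^2$ then yields the free counterpart of \eqref{eq.c4.id},
\[ \kappa_4[Z_k]=\kappa_4[X_k]+\kappa_4[Y_k]+\Delta_k,\qquad \Delta_k:=4\bigl(\tau_0[X_k^2Y_k^2]-\tau_0[X_k^2]\,\tau_0[Y_k^2]\bigr)+2\,\tau_0[X_kY_kX_kY_k]. \]

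The heart of the argument is to control the cross term $\Delta_k$, and I expect this to be the main obstacle. Passing to the Fock space via $X\mapsto X\Omega$ and using $I_0(h)\Omega=h$ together with the Wigner product formula, I would write $X_k^2\Omega-\tau_0[X_k^2]\Omega=\sum_{p=0}^{m-1}f_k\overset{p}{\frown}f_k$ and likewise for $Y_k$, so that the first piece of $\Delta_k$ becomes $\sum_{m-p=n-q}\langle g_k\overset{q}{\frown}g_k,\,f_k\overset{p}{\frown}f_k\rangle$, the sum running over order-matched pairs. Similarly $X_kY_k\Omega=\sum_r f_k\overset{r}{\frown}g_k$ and $Y_kX_k\Omega=\sum_r g_k\overset{r}{\frown}f_k$ give $\tau_0[X_kY_kX_kY_k]=\sum_r\langle f_k\overset{r}{\frown}g_k,\,g_k\overset{r}{\frown}f_k\rangle$. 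The key technical lemma is that $\Delta_k$ is \emph{nonnegative} and is \emph{dominated by the self-contraction norms} $\|f_k\overset{p}{\frown}f_k\|_2$ and $\|g_k\overset{q}{\frown}g_k\|_2$. For the first sum, Cauchy--Schwarz suffices: because $m\neq n$, the constraint $m-p=n-q$ forces at least one of the two factors in every surviving term to be a genuine contraction index from condition (2), the other factor being $L^2$-bounded. The mixed sum is more delicate, and here I would exploit the nested, non-crossing structure of the free contraction to reorganize each inner product $\langle f_k\overset{r}{\frown}g_k,\,g_k\overset{r}{\frown}f_k\rangle$, re-pairing the two copies of $f_k$ with each other and the two copies of $g_k$ with each other, so that it reduces to a pairing of self-contraction kernels; the prototype is $m=1,n=2$, where $\langle f\otimes g,\,g\otimes f\rangle$ collapses to a pairing of $f$ with itself against $g\overset{1}{\frown}g$ and is therefore bounded by $\|f\|_2^2\,\|g\overset{1}{\frown}g\|_2$. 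Establishing this reorganization in general---and with it both the sign of $\Delta_k$ and its bound by self-contraction norms---is the crux.

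Granting the polarization identity and this lemma, the equivalence (1)$\Leftrightarrow$(2) follows cleanly. By the quantitative form of Theorem \ref{thm.KNPS}, the single-chaos cumulants are themselves nonnegative sums of squared self-contraction norms, $\kappa_4[X_k]=\sum_{p=1}^{m-1}\|f_k\overset{p}{\frown}f_k\|_2^2$ and likewise for $Y_k$, so $\kappa_4[Z_k]$ is a sum of three nonnegative terms. Hence (1), namely $\kappa_4[Z_k]\to0$, forces $\kappa_4[X_k]\to0$ and $\kappa_4[Y_k]\to0$, which is exactly (2); conversely, if (2) holds then $\kappa_4[X_k],\kappa_4[Y_k]\to0$ directly and $\Delta_k\to0$ by the domination lemma, giving (1).

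For (2)$\Rightarrow$(3) I would pass from marginal to joint convergence. Condition (2) gives, via Theorem \ref{thm.KNPS}, that $X_k$ and $Y_k$ each converge to semicircular elements; along any subsequence on which $\tau_0[X_k^2]\to\sigma_1^2$ and $\tau_0[Y_k^2]\to\sigma_2^2$ with $\sigma_1^2+\sigma_2^2=\sigma^2$, the full covariance structure of $(X_k,Y_k)$ converges (the off-diagonal term $\tau_0[X_kY_k]$ vanishing identically since $m\neq n$), and the multivariate free fourth-moment theorem---the Wigner-chaos vector version of Theorem \ref{thm.KNPS}---yields joint convergence of $(X_k,Y_k)$ to a \emph{free} semicircular pair $(\sigma_1 s_1,\sigma_2 s_2)$. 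Then $X_k+Y_k\to\sigma_1 s_1+\sigma_2 s_2$, a semicircular element of variance $\sigma_1^2+\sigma_2^2=\sigma^2$; as this limit is independent of the subsequential split, the full sequence converges, giving (3). (Alternatively, one can prove (2)$\Rightarrow$(3) directly by the method of moments, using the same parity cancellation to discard the odd-split words and condition (2) to kill every non-semicircular pairing.) Finally (3)$\Rightarrow$(1) is immediate: convergence in distribution to a variance-$\sigma^2$ semicircular element $S$ means $\tau_0[Z_k^r]\to\tau_0[S^r]$ for all $r$, so $\tau_0[Z_k^4]\to2\sigma^4$ and $\tau_0[Z_k^2]\to\sigma^2$, whence $\kappa_4[Z_k]\to2\sigma^4-2\sigma^4=0$.
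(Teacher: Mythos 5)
Your architecture matches the paper's: the same polarization identity $\kappa_4[Z_k]=\kappa_4[X_k]+\kappa_4[Y_k]+\Delta_k$ obtained by killing the odd-degree words $X^3Y$, $XY^3$, the same formula $\kappa_4[X_k]=\sum_{p=1}^{m-1}\|f_k\overset{p}{\frown}f_k\|_2^2$, the same expansion $\tau_0[X_kY_kX_kY_k]=\sum_r\langle f_k\overset{r}{\frown}g_k,\,g_k\overset{r}{\frown}f_k\rangle$, and the same endgame for (2)$\Rightarrow$(3) (subsequences plus the multivariate Wigner-chaos fourth moment theorem, which the paper takes from \cite{MR3380342}) and for (3)$\Rightarrow$(1).

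The genuine gap is exactly the point you label ``the crux'': the nonnegativity of $\Delta_k=4\,\mathrm{cov}(X_k^2,Y_k^2)+2\tau_0[X_kY_kX_kY_k]$ is asserted but never established, and it is indispensable --- without it, $\kappa_4[Z_k]\to0$ does not force $\kappa_4[X_k],\kappa_4[Y_k]\to0$, so (1)$\Rightarrow$(2) collapses. The route you sketch cannot deliver it: writing the covariance as $\sum_{m-p=n-q}\langle f_k\overset{p}{\frown}f_k,\,g_k\overset{q}{\frown}g_k\rangle$ hides its sign, and both Cauchy--Schwarz and your proposed ``re-pairing'' of $\langle f\overset{r}{\frown}g,\,g\overset{r}{\frown}f\rangle$ into self-contraction pairings can only bound $|\Delta_k|$, never fix its sign. (The domination half of your lemma is also superfluous: once (1)$\Rightarrow$(2)$\Rightarrow$(3)$\Rightarrow$(1) closes, you never need (2)$\Rightarrow$(1) directly.) The paper's argument is different and is what you are missing. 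First, traciality gives $\mathrm{cov}(X^2,Y^2)=\|XY\|_2^2-\|X\|_2^2\|Y\|_2^2=\sum_{k\ge1}\|f\overset{k}{\frown}g\|^2$, a manifestly nonnegative sum of \emph{mixed} contractions. Second, since $(f\overset{k}{\frown}g)^*=g\overset{k}{\frown}f$ and $*$ is anti-unitary, $\|f\overset{k}{\frown}g\|=\|g\overset{k}{\frown}f\|$, so for each $k\ge1$ one completes the square
\[
2\|f\overset{k}{\frown}g\|^2+2\,\mathrm{Re}\,\langle f\overset{k}{\frown}g,\,g\overset{k}{\frown}f\rangle=\|f\overset{k}{\frown}g+g\overset{k}{\frown}f\|^2\ \ge\ 0,
\]
which absorbs the $k\ge1$ part of $2\tau_0[XYXY]$ into half of the covariance sum. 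The leftover $k=0$ term $2\,\mathrm{Re}\,\langle f\otimes g,\,g\otimes f\rangle$ has no companion norm in the covariance, and here the one genuinely structural computation enters: for $m>n$ and $g=g^*$ one shows $\langle f\otimes g,\,g\otimes f\rangle=\langle g\overset{n}{\frown}f,\,f\overset{n}{\frown}g\rangle$, so this term combines with the spare $2\|f\overset{m\wedge n}{\frown}g\|^2$ into one more perfect square. Until you supply this (or an equivalent) positivity argument, your proof of (1)$\Rightarrow$(2) is incomplete.
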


As with Theorem \ref{thm.BOKBS25}, a key first step is a result expressing a polarized identity for the fourth free cumulant of a sum.  In this setting, the exact analog of \eqref{eq.c4.id} does {\em not} hold. Rather, we show below that if $X,Y$ are free Wigner stochastic integrals of different parities, then
\begin{equation} \label{eq.k4.id} \kappa_4[X+Y] = \kappa_4[X]+\kappa_4[Y] + 4\mathrm{Cov}[X^2,Y^2]+2\tau_0[XYXY]. \end{equation}

We also prove the $q$-Wigner chaos analog of Theorem \ref{thm.1}, for $q\in[0,1)$, with comparable assumptions as in Theorem \ref{thm.DNN}.  As above, they key starting point is a $q$-analog of \eqref{eq.c4.id}, which (like the theorem itself) could be stated in terms of a $q$-analog of cumulants interpolating between free $k_4$ and classical $c_4$; for the analog of \eqref{eq.c4.id} see Lemma \ref{q-keylemma}.  In this case, we will state the condition in the theorem more directly.

\begin{thm} \label{thm.2} Let $q\in[0,1)$, and let $m,n \in\N$ have opposite parities.  Let $H=L^2(\mathbb{R}_+)$.  For each $k\in\N$, let $f_k \in L^2(\mathbb{R}^m_+)\cong H^{\odot m}$ and $g_k \in L^2(\mathbb{R}^n_+)\cong H^{\odot n}$ be fully symmetric functions, and let $X_k = I_q(f_k)$ and $Y_k=I_q(g_k)$ be the $q$-Wigner integrals in $\Gamma_q(H)$. Assume $\|f_k\|_{\mathcal{F}_q(H)}$ and $\|g_k\|_{\mathcal{F}_q(H)}$ converge and define
\[ \sigma_X = \lim_{k\to\infty}\|f_k\|_{\mathcal{F}_q(H)} \quad \& \quad \sigma_Y = \lim_{k\to\infty}\|g_k\|_{\mathcal{F}_q(H)}. \]
Then the following three conditions are equivalent.
\begin{enumerate}
    \item $\displaystyle{\lim_{k\to\infty} \tau_q[(X_k+Y_k)^4] = (2+q^{m^2})\sigma_X^4 + (2+q^{n^2})\sigma_Y^4 + (4+2q^{mn})\sigma_X^2\sigma_Y^2}$.
    \item $\displaystyle{\lim_{k\to\infty} f_k \overset{p}{\frown} f_k = 0}$ in $L^2(\mathbb{R}_+^{2(m-p)})$ for $p=1,\ldots,m-1$ and $\displaystyle{\lim_{k\to\infty} g_k \overset{p}{\frown} g_k = 0}$ in $L^2(\mathbb{R}_+^{2(n-q)})$ for $p=1,\ldots,n-1$.
    \item The sequence $(X_k+Y_k)_{k\in\N}$ converges in distribution to a mixed $Q$-Gaussian random variable with respect to the vector $[\sigma_X,\sigma_Y]$, where
    \[ Q = \begin{bmatrix} q^{m^2} & q^{mn} \\ q^{mn} & q^{n^2} \end{bmatrix}. \]
\end{enumerate}
\end{thm}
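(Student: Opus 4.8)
The plan is to establish the cycle of implications (1) $\Rightarrow$ (2) $\Rightarrow$ (3) $\Rightarrow$ (1), beginning with a reduction that exploits the opposite-parity hypothesis. Since $m+n$ is odd, every length-four word in $X_k,Y_k$ containing an odd number of $X_k$'s (hence also of $Y_k$'s) has total tensor degree of the wrong parity, so its trace vanishes; the trace property then collapses the six surviving mixed words into two cyclic classes, giving
\[ \tau_q[(X_k+Y_k)^4] = \tau_q[X_k^4] + \tau_q[Y_k^4] + 4\,\tau_q[X_k^2 Y_k^2] + 2\,\tau_q[X_k Y_k X_k Y_k], \]
while orthogonality of distinct chaoses gives $\tau_q[(X_k+Y_k)^2] = \|f_k\|_{\mathcal{F}_q(H)}^2 + \|g_k\|_{\mathcal{F}_q(H)}^2$. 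The right-hand side of condition (1) is precisely the value of this expansion when each trace is replaced by its leading ($q$-Gaussian) contribution, so (1) asserts that the total ``defect'' tends to $0$.

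The heart of the argument is a $q$-analog of the polarization identity \eqref{eq.c4.id}, recorded as Lemma \ref{q-keylemma}. Using the $q$-Wick pairing formula for traces of products of $q$-Wigner integrals, I would write each of the four traces as a sum over pair partitions of the underlying legs carrying no block-internal pair, weighted by $q^{\mathrm{cr}}$ and by the matching products of inner products. For $\tau_q[X_k^4]$ and $\tau_q[Y_k^4]$ this reproduces the single-chaos expansions behind Theorem \ref{thm.DNN}, whose defects are non-negative combinations of $\|f_k \overset{p}{\frown} f_k\|^2$ and $\|g_k\overset{p}{\frown} g_k\|^2$ for $q\in[0,1)$. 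The genuinely new content is the pair of cross moments: the pairing joining the two $f$-factors and the two $g$-factors contributes $\|f_k\|^2\|g_k\|^2$ to $\tau_q[X_k^2Y_k^2]$ (no crossings) and $q^{mn}\|f_k\|^2\|g_k\|^2$ to $\tau_q[X_kY_kX_kY_k]$ (each of the $m$ $f$-arcs crosses each of the $n$ $g$-arcs), matching the coefficients of $4$ and $2q^{mn}$ in (1). What remains is to show that the combination
\[ 4\big(\tau_q[X_k^2Y_k^2]-\|f_k\|^2\|g_k\|^2\big) + 2\big(\tau_q[X_kY_kX_kY_k]-q^{mn}\|f_k\|^2\|g_k\|^2\big) \]
is a non-negative combination of squared mixed-contraction norms $\|f_k\overset{p}{\frown} g_k\|^2$. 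I expect this to be the main obstacle: each of the two defects separately contains indefinite inner-product terms, and it is only their specific $4\!:\!2$ combination, together with the opposite parity of $m$ and $n$, that forces the indefinite contributions to cancel and leaves a sum of squares. (For equal parity the previously vanishing words reappear and this cancellation fails, mirroring the dichotomy behind Theorem \ref{thm.BOKBS25}.) Positivity of the crossing weights for $q\in[0,1)$ is what makes the whole expression non-negative, consistent with the $q<0$ breakdown noted after Theorem \ref{thm.DNN}.

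Granting Lemma \ref{q-keylemma}, the deviation $\tau_q[(X_k+Y_k)^4]-[\,\text{RHS of (1)}\,]$ is, up to the convergent normalizations $\|f_k\|\to\sigma_X$ and $\|g_k\|\to\sigma_Y$, a sum of non-negative terms comprising the self-contraction squares and the mixed-contraction squares. Implication (1) $\Rightarrow$ (2) is then immediate, since vanishing of the total forces in particular each self-contraction norm to $0$. For (2) $\Rightarrow$ (3) I would first invoke the standard Cauchy--Schwarz-type estimate bounding $\|f_k\overset{p}{\frown} g_k\|$ by a geometric mean of self-contraction norms, so that (2) forces all mixed contractions to vanish as well (here $m\neq n$ since the parities differ), and then show that every $\ast$-moment $\tau_q[(X_k+Y_k)^r]$ converges to that of $S+T$: in the $q$-Wick expansion of a general word, the pairings joining whole factors reproduce exactly the mixed $Q$-Gaussian pairing formula for the matrix $Q$, while every remaining pairing is bounded by a product of contraction norms and hence vanishes. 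Since $X_k+Y_k$ is self-adjoint with uniformly bounded moments (as $|q|<1$), moment convergence yields convergence in distribution. Finally, (3) $\Rightarrow$ (1) is a direct computation: evaluating $\tau_q[(S+T)^4]$ by the $Q$-weighted pairing formula gives $(2+q^{m^2})\sigma_X^4+(2+q^{n^2})\sigma_Y^4+(4+2q^{mn})\sigma_X^2\sigma_Y^2$, which is precisely the right-hand side of (1).
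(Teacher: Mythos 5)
Your architecture matches the paper's: kill the odd-parity words, reduce to the four traces $\tau_q(X^4)$, $\tau_q(Y^4)$, $\tau_q(X^2Y^2)$, $\tau_q(XYXY)$, handle the first two by the single-chaos result behind Theorem \ref{thm.DNN}, and isolate a non-negative cross-term defect. But the central technical claim --- that
\[ \kappa_{X,Y} \;=\; 4\bigl(\tau_q[X^2Y^2]-\sigma_X^2\sigma_Y^2\bigr) + 2\bigl(\tau_q[XYXY]-q^{mn}\sigma_X^2\sigma_Y^2\bigr) \;\ge\; 0 \]
--- is exactly the point you flag as ``the main obstacle'' and then do not prove, and this is the genuinely new content of the paper's Lemma \ref{q-keylemma}. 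Two things are missing. First, your bookkeeping of the ``leading contributions'' is off: by the contraction formula, $\tau_q(X^2Y^2)=\sum_{k=0}^{m\wedge n}\|f\overset{k}{\frown}_q g\|_{\mathcal{F}_q(H)}^2$, and the $k=0$ term is $\|f\otimes g\|_{\mathcal{F}_q(H)}^2$, which is \emph{not} equal to $\|f\|_{\mathcal{F}_q(H)}^2\|g\|_{\mathcal{F}_q(H)}^2$; likewise the $k=0$ term of $\tau_q(XYXY)$ is $\langle f\otimes g, g\otimes f\rangle_q$, not $q^{mn}\sigma_X^2\sigma_Y^2$. (Also, contrary to your description, the $\tau_q(X^2Y^2)$ defect is separately non-negative; only the $XYXY$ defect is indefinite, and the $4\!:\!2$ ratio is used for the $k\ge 1$ terms via $4\|a\|^2+2\mathrm{Re}\langle a,b\rangle = 2\|a\|^2+\|a+b\|^2$ with $b=(f\overset{k}{\frown}_q g)^*=g\overset{k}{\frown}_q f$, so $\|a\|=\|b\|$. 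Opposite parity plays no role in this positivity; it is used only to kill $\tau_q(X^3Y)$ and $\tau_q(XY^3)$.) Second, the residual $k=0$ inequalities
\[ \|f\otimes g\|_{\mathcal{F}_q(H)}^2 \ge \sigma_X^2\sigma_Y^2, \qquad \langle f\otimes g, g\otimes f\rangle_q \ge q^{mn}\sigma_X^2\sigma_Y^2 \]
require an argument you have not supplied: the paper writes $P_q^{(m+n)}=R_{m,m+n}(P_q^{(m)}\otimes P_q^{(n)})$, peels off the identity (resp.\ flip) coset representative to produce the claimed leading term, and shows each remaining coset contributes $q^{\mathrm{inv}(\pi)}\|f\overset{\alpha(\pi)}{\frown}g\|^2\ge 0$ --- and the identification $\langle f\otimes g,\pi(f\otimes g)\rangle=\|f\overset{\alpha(\pi)}{\frown}g\|^2$ uses \emph{full symmetry} of $f$ and $g$ in an essential way (this is precisely where the hypothesis of fully symmetric kernels, rather than mere mirror symmetry, enters). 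Your proposal never invokes full symmetry at this step, so as written the positivity of $\kappa_{X,Y}$ is not established.

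On the remaining implications your route is sound but more laborious than the paper's: for (2)$\Rightarrow$(3) the paper simply cites the multivariate equivalence (ii)$\Leftrightarrow$(iv) of Theorem 3.1 in Deya--Noreddine--Nourdin to get joint convergence of $(X_k,Y_k)$ to the mixed $Q$-Gaussian pair, rather than re-deriving a Wick expansion and Cauchy--Schwarz contraction estimates by hand; and (3)$\Rightarrow$(1) is the same pair-partition computation in both. If you fill in the coset/positivity argument for $\kappa_{X,Y}$, the rest of your outline goes through.
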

The proof of Theorem \ref{thm.2}, along with the definition of mixed $Q$-Gaussian random variables (generalizing the distribution of the field operators on $q$-Fock spaces) can be found in Section \ref{q-case}.

\begin{rem} \begin{enumerate} \item Later generalizations of Theorem \ref{thm.NuPec} added another equivalent condition stated in terms of Malliavin derivatives, which leads to a concise quantitative estimate for the Wasserstein distance to the Gaussian distribution in terms of the fourth cumulant.  In the free case, a version of this leg of the Fourth Moment Theorem was proved in \cite{1529a405-f4b1-33eb-aeac-05c9f76e60d3} in the special case of fully symmetric kernel functions $f_k$, with the full proof completed later in \cite{Cebron2021}.  Similarly, \cite{basseoconnor2025fourthmoment} proved a comparable Malliavin calculus condition which, following the same framework, yields a quantitative bound for the distance to a Gaussian distribution.  We have not explored this stochastic analytic connection to our main Theorems \ref{thm.1} and \ref{thm.2} yet, leaving them for a future publication.
\item The condition that the kernel functions $f_k,g_k$ should be fully symmetric is stronger than natural in Theorems \ref{thm.DNN} and \ref{thm.2}, where mirror symmetry is equivalent to selfadjointness of the stochastic integrals $X_k,Y_k$.  For a single or double order of $q$-Wigner chaos, it is yet unknown how to generalize Fourth Moment Theorems to that fully general arena.
\end{enumerate}
\end{rem}

\section{Preliminaries}\label{Preliminaries}
\subsection{Semicircle distribution and free cumulants}
In this section, we briefly introduce basic notions in free probability. We refer \cite{MR2266879} for details.
The semicircle distribution $S(0,\sigma^2)$ with mean $0$ and variance $\sigma^2$ is defined by its density function
\[  \frac{1}{2\pi \sigma^2}\sqrt{4\sigma^2-x^2}\ 1_{[-2\sigma,2\sigma].} \]
This distribution is also characterized by its moments 
\[E(s^n)=\begin{cases}
    \frac{1}{k+1}\binom{2k}{k}  &(n=2k) \\
   \quad  0 \quad &(n=2k-1).
\end{cases} \]
In free probability, we say that non-commutative random variables $X,Y$ (with a state $\tau$) are freely independent if, for any polynomials $P_1,\ldots,P_m$ and $Q_1,\ldots,Q_m$, we have
    \[ \tau\left[\overset{\circ}{P_1(X)}\overset{\circ}{Q_1(Y)}\cdots \overset{\circ}{P_m(X)}\overset{\circ}{Q_m(Y)}\right]=0\]
    where we set $\overset{\circ}{a}=a-\tau(a)$.
    The free independence is also characterized by free cumulants $\{\kappa_n\}_{n\in \mathbb{N}}$, which are multi-linear functionals defined inductively by the moment-cumulant formula in terms of non-crossing partitions (see \cite[Lecture 11]{MR2266879}). Actually, it is known that 
    $X$ and $Y$ are freely independent if and only if mixed free cumulants vanish (see \cite[Theorem 11.16]{MR2266879}). 
    By the moment-cumulant formula, the free cumulants of a random variable characterize its moments, and the free cumulants of the semicircle distribution $S(0,\sigma^2)$ are
\[ \kappa_2(s)=\sigma^2, \quad \kappa_n(s)=0 \  (n \neq 2)\]
where $\kappa_n(s)=\kappa_n(s,\ldots,s)$. By using the equivalence of free independence and vanishing mixed free cumulants, we can see that, if $X,Y$ are freely independent semicircle distributions $S(0,\sigma_X^2)$ and $S(0,\sigma_Y^2)$, then $X+Y$ is also a semicircle distribution $S(0,\sigma_X^2+\sigma_Y^2)$.  
We also use the following formula of the fourth cumulant for a centered random variable $X$, 
\[\kappa_4(X)=\tau(X^4)-2\tau(X^2)^2.\]

\subsection{Wigner chaos\label{sect.Wigner.chaos}}
In this section, we introduce our basic tools of Wigner chaos. We refer \cite[Section 5]{MR1660906} for more details. Let $H_{\mathbb{R}}$ be a separable real Hilbert sapce and $H=H_{\mathbb{R}}+\sqrt{-1}H_{\mathbb{R}}$ be the complexification $H_{\mathbb{R}}$. We define the full Fock space by
\[\mathcal{F}_0(H)=\mathbb{C}\Omega \oplus \bigoplus_{n=1}^{\infty}H^{\otimes n}\]
where $\Omega$ is a unit vector. 
For $ f \in H_{\mathbb{R}}$, we consider left creation operator $l( f)$ defined by
\[c_0(f) f_1\otimes  f_2 \cdots \otimes  f_n= f\otimes  f_1\otimes  f_2 \cdots \otimes  f_n,\]
and the von Neumann algebra generated by semicircular operators 
\[\Gamma_0(H_{\mathbb{R}})=\{\omega_0( f)=c_0(f)+c_0(f)^*|\  f \in H_{\mathbb{R}}\}^{"}.\] 
It is known that $\Gamma_0(H_{\mathbb{R}})$ admits a unique faithful normal tracial state $\tau_0$ deinfed by
\[ \tau_0(X)=\langle X\Omega,\Omega\rangle,\]
and each $\omega_0( f)$ has semicircle distribution $S(0,\| f\|^2)$ with respect to $\tau_0$. Moreover, if $ f\perp  g$, it is known that $\omega_0( f)$ and $\omega_0( g)$ are freely independent (e.g., \cite{MR1217253}).   

Let $\mathcal{F}_{alg}(H)$ denote the algebraic Fock space of $H$, i.e., the linear subspace of $\mathcal{F}_0(H)$ that consists of finite linear spans of the algebraic tensor products of $H$.
Then, we can construct the linear map $I_0: \mathcal{F}_{alg}(H) \to \Gamma_0(H_{\mathbb{R}})$ determined by
\[I_0( f)\Omega= f.\]
Note that $ I_0$ extends to Hilbert space isomorphism between the full Fock space $\mathcal{F}_0(H)$ and GNS Hilbert space $L^2(\Gamma_0(H_{\mathbb{R}}),\tau_0)$ with respect to $\tau_0$.
In particular, when $  f \in H^{\otimes n}$, $ I_0( f)$ is called the $n$th free Wigner chaos. We define the complex conjugate of $ f=x+iy\in H$
by $\overline{ f}=x-iy $ and $*: \mathcal{F}_0(H)\to \mathcal{F}_0(H)$ be anti-linear map defined by the extension of 
\[( f_1\otimes  f_2 \otimes \cdots \otimes  f_n)^*=\overline{ f_n}\otimes \overline{ f_{n-1}} \otimes \cdots \otimes \overline{ f_1}.\]
In Section \ref{0-case}, we use the following contraction formula \cite[Proposition 5.3.3]{MR1660906} for $ f \in H^{\otimes m}$ and $ g \in H^{\otimes n}$
\[ I_0( f) I_0( g)=\sum_{k=0}^{m\wedge n} I_0( f\overset{k}{\frown}  g).\]
In this formula, we set
\[ f\overset{k}{\frown}  g := (1_{m-k}\otimes \Phi_k \otimes 1_{n-k})( f \otimes  g)\in H^{\otimes m+n-2k}\]
where $1_m\in B(H^{\otimes m})$ is the identity operator and  $\Phi_k:H^{\otimes k}\otimes H^{\otimes k} \to \mathbb{C}$ is a linear map defined by $\Phi_k( f\otimes  g)=\langle  f,  g^*\rangle$. 

Our particular interest is the case where $H_{\mathbb{R}}$ is the set of real-valued $L^2$ functions on $\mathbb{R}_+:=[0,\infty)$ with respect to the Lebesgue measure and we set $H=L^2(\mathbb{R_+})$ for complex-valued $L^2$-functions on $\mathbb{R}_+$. In this case, we have a Hilbert space isomorphism
\[ L^2(\mathbb{R}_+)^{\otimes n}\cong L^2(\mathbb{R}_+^n).\]Through this identification, we have $ f(t_1,t_2,\ldots,t_n)^*=\overline{ f(t_n,t_{n-1},\ldots,t_1)}$ and
\[( f\overset{k}{\frown} g)(\vec{t}_{m-k},\vec{r}_{n-k}))= \int_{\mathbb{R}_+^k} f(\vec{t}_{m-k}, \vec{s}_k) g(\vec{s}_k^*,\vec{r}_{n-k})  d\vec{s}_k\]
where $\vec{t}_{m-k}=(t_1,\ldots,t_{m-k})$, $\vec{s}_k=(s_1,\ldots,s_k)$, $\vec{r}_{n-k}=(r_1,\ldots,r_{n-k}) $, and $\vec{s}_k^*=(s_k,\ldots,s_1)$. 

 We also use the following identity (see \cite[Lemma 1.24]{1529a405-f4b1-33eb-aeac-05c9f76e60d3})
\[ ( f\overset{k}{\frown} g)^*= g^*\overset{k}{\frown}  f^*.\]

\subsection{$q$-Wigner chaos\label{sect.q-Wigner.chaos}}
We also consider a $q$-deformation of Wigner chaos. We refer \cite{MR1463036,MR3749579,MR1952458} for details.  The difference from the previous section is that we consider the inner product on $H^{\otimes m}$ composed with a ``$q$-symmetrization''
\[\langle  f_1\otimes \cdots \otimes  f_m,  g_1 \otimes \cdots \otimes  g_n \rangle_q = \delta_{m,n} \langle  f_1\otimes \cdots \otimes  f_m, P^{(m)}_q  g_1 \otimes \cdots \otimes  g_n\rangle_{H^{\otimes m}} \]
where the operator $P^{(m)}_q$ on $H^{\otimes m}$ is defined by
\begin{align*} P_q^{(m)}( f_1\otimes \cdots \otimes  f_m) 
&= \sum_{\pi \in S_m} q^{\mathrm{inv}(\pi)}  f_{\pi(1)}\otimes \cdots \otimes  f_{\pi(m)}\\
&=\sum_{\pi \in S_m} q^{\mathrm{inv}(\pi)}\  f_{\pi^{-1}(1)}\otimes \cdots \otimes  f_{\pi^{-1}(m)}.\end{align*}
The $q$-Fock space $\mathcal{F}_q(H)$ is the Hilbert space obtained from $\mathcal{F}_{\mathrm{alg}}(H)$ and this $q$-deformed inner product.
For each $\pi \in S_m$, we define the permutation operator $\pi$ on $H^{\otimes m}$ by $\pi( f_1\otimes \cdots \otimes  f_m)= f_{\pi^{-1}(1)}\otimes \cdots \otimes  f_{\pi^{-1}(m)}$. Then, we have $P_q^{(m)}=\sum_{\pi\in S_m}q^{\mathrm{inv}(\pi)}\pi=\sum_{\pi\in S_m}q^{\mathrm{inv}(\pi)}\pi^{-1}$. 
It is convenient to introduce the operator $R_{k,m}$ on $H^{\otimes m}$ with $k\le m$ by
\[R_{k,m}=\sum_{\pi \in \quotient{S_m}{S_k \times S_{m-k}}}q^{\mathrm{inv}(\pi)} \pi\]
where
\[ \quotient{S_m}{S_k \times S_{m-k}} :=\text{ the left cosets of $S_k \times S_{m-k}$ in $S_m$.} \]
In this paper, we always take the unique representative of $\sigma \in \quotient{S_m}{S_k \times S_{m-k}}$ so that $\mathrm{inv}(\sigma)$ is minimal.
It is known that $R_{k,n}$ satisfies the following property (\cite[Theorem 2.1]{MR1811255})
\[R_{k,m}(P_q^{(k)}\otimes P_q^{(m-k)})=P_q^{(m)}=(P_q^{(k)}\otimes P_q^{(m-k)})R_{k,m}^*.\]
In the same way, we define the left creation operator $c_q( f)$, and we use the notation $\Gamma_q(H_{\mathbb{R}})$ for the von Neumann algebra generated by $\omega_q(f)=c_q(f)+c_q(f)^*$ with $ f \in H_{\mathbb{R}}$. It is known that the vacuum state $\tau_q(X)=\langle X \Omega,\Omega\rangle_q $ is still a faithful normal trace on $\Gamma_q(H_{\mathbb{R}})$. Moreover, there is a Hilbert space isomorphism $I_q: \mathcal{F}_q(H)\to L^2(\Gamma_q(H_{\mathbb{R}}),\tau_q)$ such that
\[I_q( f)\Omega = f.\]
We also have the contraction formula for $ f\in H^{\otimes m}$ and $ g\in H^{\otimes n}$,
\[ I_q( f)I_q( g)=\sum_{k=0}^{m\wedge n} I_q( f\overset{k}{\frown}_q g).\]
In this case, we set (see \cite{MR3749579} and \cite[Lemma 4.6]{MR4251282})
\[  f\overset{k}{\frown}_q g= (1_{m-k}\otimes \Phi^q_k \otimes 1_{n-k})(R^*_{m-k,m} f) \otimes (R^*_{k,n} g)\in H^{\otimes m+n-2k} \]
where $\Phi^q_k:H^{\otimes k}\otimes H^{\otimes k} \to \mathbb{C}$ is a linear map defined by $\Phi_k^q( f\otimes  g)=\langle  f,  g^*\rangle_q$.
%\cite Donati-Martin, Deya-Scott, Bozjeko, Bozejko-Speicher, 
 Similar to $q=0$ case, we use the following identity (\cite[Lemma 2.5]{MR3749579})
   \[ ( f \overset{k}{\frown}_q g)^*= g^* \overset{k}{\frown}_q f^*.\]
   
Now, we fix $H=L^2(\mathbb{R}_+)$.
  In this case, for $f=f_1\otimes f_2 \otimes \cdots \otimes f_m \in H^{\otimes m}$ and $\pi \in S_m$, we have $\pi (f)=f_{\pi^{-1}(1)}\otimes f_{\pi^{-1}(2)}\otimes \cdots \otimes f_{\pi^{-1}(m)}$ and
   \[ f_{\pi^{-1}(1)}(t_1) f_{\pi^{-1}(2)}(t_2)\cdots  f_{\pi^{-1}(m)}(t_m)=f_{1}(t_{\pi(1)}) f_{2}(t_{\pi(2)})\cdots  f_{m}(t_{\pi(m)}).\]
   Under the isomorphism \[L^2(\mathbb{R}_+)^{\otimes m}\ni f=\otimes_{i=1}^mf_i\mapsto f(\vec{t})=\prod_{i=1}^mf_i(t_i)\in L^2(\mathbb{R}^m_+),\] we have
   \[\pi(f)(\vec{t})=f(\pi^{-1}(\vec{t}))\]
   where we define $\pi^{-1}(\vec{t})=(t_{\pi(1)},t_{\pi(2)},\ldots,t_{\pi(m)})$. 
We say $f \in L^2(\mathbb{R}_+)^{\otimes m} \cong L^2(\mathbb{R}_+^m)$ is fully symmetric if $f$ is real-valued and $\pi(f)=f$ for any $\pi \in S_m$. Note that for a fully symmetric function $f\in L^2(\mathbb{R}_+)^{\otimes m }$, we have $P_q^{(m)}f=[m]_q! f$ and 
\[ \|I_q(f)\|_{2}=\|f\|_{\mathcal{F}_q(L^2(\mathbb{R}_+))}=\sqrt{[m]_q!} \ \|f \|_{L^2(\mathbb{R}_+)^{\otimes m}}\]
where $[m]_q!=[m]_q[m-1]_q!\cdots [2]_q[1]_q$
with $[m]_q=\sum_{i=1}^{m}q^{i-1}$.
We also remark that, when $ f\in L^2(\mathbb{R}_+)^{\otimes m}$ and $ g\in L^2(\mathbb{R}_+)^{\otimes n}$ are fully symmetric, we have a simple expression for $ f\overset{k}{\frown}_q g$ (\cite[Theorem 4.2]{MR3749579}),
\[ f\overset{k}{\frown}_q g=[k]_q! \binom{m}{k}_q\binom{n}{k}_q  f\overset{k}{\frown} g \]
where $\binom{m}{k}_q=\frac{[m]_q!}{[k]_q![m-k]_q!}$. One can check this by $R^*_{m-k,m} f=\binom{m}{k}_q f$, $ R^*_{k,n} g=\binom{n}{k}_q  g$, and
\[ (1_{m-k}\otimes \Phi_k^q \otimes 1_{n-k}) f \otimes  g = [k]_q! (1_{m-k}\otimes \Phi_k \otimes 1_{n-k}) f \otimes  g. \]

\section{Proof for Wigner chaos}\label{0-case}
We start with proving the key lemma, which is a free probabilistic analog of \cite[Lemma 1.7]{basseoconnor2025fourthmoment}. In this section and the next section, we set $H=L^2(\mathbb{R}_+)$.
\begin{lm}\label{keylemma}
   Let $X= I_0( f)$ and $Y= I_0( g)$ with $ f= f^*\in H^{\otimes m}$ and $ g = g^*\in H^{\otimes n}$ with $m,n\ge 1$. If $m,n$ have opposite parities, we have
    \[\kappa_4(X+Y)=\kappa_4(X)+\kappa_4(Y)+4\mathrm{cov}(X^2,Y^2)+2\tau_0(XYXY).\]
    Moreover, we have
    \[\kappa_4(X), \ \kappa_4(Y),\ 4\mathrm{cov}(X^2,Y^2)+2\tau_0(XYXY)\ge 0.\]
    \end{lm}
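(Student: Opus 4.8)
The plan is to separate the claim into the algebraic polarization identity and the three positivity assertions, and to treat the positivity of the cross term as the genuinely hard part.

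For the identity, I would expand $\kappa_4(X+Y)$ by multilinearity of the free cumulant into the $2^4=16$ terms $\kappa_4(Z_1,Z_2,Z_3,Z_4)$ with each $Z_i\in\{X,Y\}$. Since $\tau_0$ is a trace, the free cumulants are invariant under cyclic permutation of their arguments, so these terms collapse to $\kappa_4(X)+\kappa_4(Y)+4\kappa_4(X,X,X,Y)+4\kappa_4(X,Y,Y,Y)+4\kappa_4(X,X,Y,Y)+2\kappa_4(X,Y,X,Y)$. The key structural input is the $\mathbb{Z}_2$-grading of $\Gamma_0(H)$ coming from the contraction formula (a product of a degree-$m$ and a degree-$n$ chaos lies in chaos of degree $\equiv m+n$), together with the fact that $\tau_0$ annihilates odd elements; an induction on the moment--cumulant formula then shows that every mixed free cumulant whose homogeneous entries have odd total degree vanishes. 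Because $m,n$ have opposite parity, the $(3,1)$ and $(1,3)$ terms carry total degree $\equiv m+n$ odd and hence vanish. Finally, for centered variables $\kappa_4(a,b,c,d)=\tau_0(abcd)-\tau_0(ab)\tau_0(cd)-\tau_0(ad)\tau_0(bc)$ (only the two non-crossing pairings survive), and since $\tau_0(XY)=0$ by parity this yields $\kappa_4(X,X,Y,Y)=\tau_0(X^2Y^2)-\tau_0(X^2)\tau_0(Y^2)=\mathrm{cov}(X^2,Y^2)$ and $\kappa_4(X,Y,X,Y)=\tau_0(XYXY)$, which is the stated identity.

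For $\kappa_4(X)\ge 0$ and $\kappa_4(Y)\ge 0$ I would use the contraction formula directly. Writing $X^2=\sum_{k=0}^m I_0(f\overset{k}{\frown}f)$ and using $\tau_0(I_0(a)I_0(b))=\langle a,b^*\rangle$ together with $(f\overset{k}{\frown}f)^*=f^*\overset{k}{\frown}f^*=f\overset{k}{\frown}f$ (mirror symmetry), one gets $\tau_0(X^4)=\sum_{k=0}^m\|f\overset{k}{\frown}f\|^2$. The boundary terms $k=0$ and $k=m$ each equal $\|f\|^4=\tau_0(X^2)^2$, so $\kappa_4(X)=\tau_0(X^4)-2\tau_0(X^2)^2=\sum_{p=1}^{m-1}\|f\overset{p}{\frown}f\|^2\ge 0$, and identically for $Y$; this also makes the equivalence with condition (2) transparent.

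The hard part is $4\,\mathrm{cov}(X^2,Y^2)+2\tau_0(XYXY)\ge 0$. Expanding the relevant $L^2$ norms by chaos degree gives $\|XY\|_2^2=\sum_k\|f\overset{k}{\frown}g\|^2$ and $\|XY+YX\|_2^2=\sum_k\|f\overset{k}{\frown}g+g\overset{k}{\frown}f\|^2$, and a short computation using $\tau_0(X^2Y^2)=\|XY\|_2^2$ and $\|XY\pm YX\|_2^2=2\tau_0(X^2Y^2)\pm 2\tau_0(XYXY)$ reduces the claim to $2\|XY\|_2^2+\|XY+YX\|_2^2\ge 4\|f\|^2\|g\|^2$. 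I expect this to be the main obstacle: generic operator-positivity bounds are too weak, since the inequality is already tight in the smallest genuinely noncommutative case $m=1,n=2$, so the uncontracted degree-$(m+n)$ pieces $f\otimes g$ and $g\otimes f$ must be accounted for exactly against the lower contractions. I would therefore prove a contraction identity---valid precisely because $m,n$ have opposite parity and $f,g$ are mirror symmetric---rewriting the covariance inner products $\langle f\overset{m-j}{\frown}f,\,g\overset{n-j}{\frown}g\rangle$ in terms of the mixed contractions $f\overset{k}{\frown}g$, so that $4\,\mathrm{cov}(X^2,Y^2)+2\tau_0(XYXY)$ reorganizes into a manifestly nonnegative sum of squared $L^2$ norms of (symmetrized) contractions of $f$ and $g$. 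Establishing this sum-of-squares reorganization---the free analog of the classical computation giving $6\,\mathrm{cov}(X^2,Y^2)\ge 0$ in \cite{basseoconnor2025fourthmoment}, but now necessarily entangling the $X^2Y^2$ and $XYXY$ orderings---is where essentially all of the work lies.
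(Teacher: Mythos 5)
Your handling of the polarization identity and of $\kappa_4(X),\kappa_4(Y)\ge 0$ is correct and essentially matches the paper (which expands $\tau_0[(X+Y)^4]$ directly and kills $\tau_0(X^3Y),\tau_0(XY^3)$ by parity of chaos degrees, rather than routing through multilinearity and cyclic invariance of $\kappa_4$; the two arguments are interchangeable). Your reduction of the cross-term positivity to
\[ 2\|XY\|_2^2+\|XY+YX\|_2^2\ \ge\ 4\|f\|^2\|g\|^2, \]
equivalently to
\[ 2\sum_{k=1}^{m\wedge n}\bigl\|f\overset{k}{\frown}g\bigr\|^2+\sum_{k=1}^{m\wedge n}\bigl\|f\overset{k}{\frown}g+g\overset{k}{\frown}f\bigr\|^2+2\,\mathrm{Re}\,\langle f\otimes g,\ g\otimes f\rangle\ \ge\ 0, \]
is also correct, and you rightly isolate the uncontracted term $2\,\mathrm{Re}\,\langle f\otimes g, g\otimes f\rangle$ as the sole possibly negative contribution.

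The gap is that the decisive step is announced rather than proved: you assert that some ``contraction identity'' will reorganize everything into a sum of squares, but you neither state it nor verify it, and the direction you point in --- rewriting $\langle f\overset{m-j}{\frown}f,\,g\overset{n-j}{\frown}g\rangle$ in terms of mixed contractions --- is not the relevant one (those self-contraction pairings never enter; $\mathrm{cov}(X^2,Y^2)$ is already handled as $\|XY\|_2^2-\|f\|^2\|g\|^2$). What is actually needed is an identity between the two \emph{extreme} mixed contractions: taking WLOG $m>n$ (so $m\wedge n=n$; opposite parity guarantees $m\neq n$ but is otherwise irrelevant here),
\[ \langle f\otimes g,\ g\otimes f\rangle=\langle g\overset{n}{\frown}f,\ f\overset{n}{\frown}g\rangle, \]
which follows by writing both sides as integrals over $\mathbb{R}_+^{m+n}$ and using $g=g^*$ to regroup the variables. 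Granting this, $2\,\mathrm{Re}\,\langle f\otimes g,g\otimes f\rangle+2\|f\overset{n}{\frown}g\|^2=\|f\overset{n}{\frown}g+g\overset{n}{\frown}f\|^2$, the $k=n$ summand of $2\sum_{k=1}^{n}\|f\overset{k}{\frown}g\|^2$ absorbs the deficit, and the whole expression becomes a manifestly nonnegative sum of squares. Without this specific identity the positivity claim is not established, so the proposal as written is incomplete at exactly the point you flagged as carrying ``essentially all of the work.''
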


\begin{proof}
Since $X+Y$ is centered, we have
\[\kappa_4(X+Y)=\tau_0[(X+Y)^4]-2\tau_0[(X+Y)^2]^2=\tau_0[(X+Y)^4]-2\left(\tau_0(X^2)+\tau_0(Y^2)\right)^2.\]
In the expansion of $\tau_0[(X+Y)^4]$, $\tau_0(X^3Y)$ and $\tau_0(XY^3)$ are equal to $0$ since, by the contraction formula, each Wigner chaos in the chaos expansion of $X^3Y$ (resp. $Y^3X$) has degree $3m+n-2k$ (resp. $m+3n-2k$) for some integer $k$ which cannot be equal to $0$ if $m,n$ have opposite parties.  
 Thanks to the trace property of $\tau_0$, we have
\[\tau_0[(X+Y)^4]=\tau_0(X^4)+\tau_0(Y^4)+4\tau_0(X^2Y^2)+2\tau_0(XYXY)\]
and
\begin{align*}
\kappa_4(X+Y)&=\tau_0[(X+Y)^4]-2\left(\tau_0(X^2)+\tau_0(Y^2)\right)^2\\
&=\kappa_4(X)+\kappa_4(Y)+4\tau_0(X^2Y^2)-4\tau_0(X^2)\tau_0(Y^2)+2\tau_0(XYXY)\\
&=\kappa_4(X)+\kappa_4(Y)+4\mathrm{cov}(X^2,Y^2)+2\tau_0(XYXY).
\end{align*}
For the positivity of $\kappa_4(X)$ ($\kappa_4(Y)$ as well) which is proved in \cite[Theorem 1.6]{1529a405-f4b1-33eb-aeac-05c9f76e60d3}, we apply the contraction formula to $X^2$ and we have
\begin{align*}
    X^2&=\sum_{k=0}^m I_0( f\overset{k}{\frown}  f).
\end{align*}
By taking $L^2$-norm, we have
\begin{align*}
    \tau_0(X^4)&=\sum_{k=0}^m\| f\overset{k}{\frown}  f\|^2.
\end{align*}

Since $\| f\overset{0}{\frown} f\|^2=\| f\|^4=\| f\overset{m}{\frown} f\|^2$, we have
\[\kappa_4(X)=\tau_0(X^4)-2\tau_0(X^2)=\sum_{k=1}^{m-1}\| f\overset{k}{\frown} f\|^2\ge 0.\]
For the positivity of $4\mathrm{cov}(X^2,Y^2)+2\tau_0(XYXY)$, we apply the contraction formula to $XY$ and we obtain 
  \[ XY=\sum_{k=0}^{m\wedge n} I_0( f\overset{k}{\frown} g).\]

  Since $\tau_0$ is a trace and $\left\| f\overset{0}{\frown}  g\right\|^2=\left\| f\otimes  g\right\|^2=\| f\|^2\| g\|^2$, we have
   \begin{align*}
       4\mathrm{cov}(X^2,Y^2)&=4\tau_0(YX^2Y)-4\tau_0(X^2)\tau_0(Y^2)\\
       &=4\|XY\|_2^2 -4 \|X\|_2^2\|Y\|_2^2 \\
       &=4\sum_{k=0}^{m\wedge n}\left\| f\overset{k}{\frown}  g\right\|^2-4\| f\|^2\| g\|^2\\
       &=4\sum_{k=1}^{m\wedge n}\left\| f\overset{k}{\frown}  g\right\|^2.
   \end{align*}
  Since $\tau_0(XYXY)$ is real, the contraction formula of $XY$ also tells us 
\[2\tau_0(XYXY)=\sum_{k=0}^{m\wedge n}\left\langle  f\overset{k}{\frown}  g , g\overset{k}{\frown}  f    \right\rangle +\left\langle  g\overset{k}{\frown}  f, f\overset{k}{\frown}  g     \right\rangle=2\mathrm{Re}\sum_{k=0}^{m\wedge n}\left\langle  f\overset{k}{\frown}  g , g\overset{k}{\frown}  f    \right\rangle.  \]
   Since $\ast$ is the anti-unitary and $( f\overset{k}{\frown}  g)^*= g^* \overset{k}{\frown} f^*= g\overset{k}{\frown}  f$ , we have
   \begin{align*}
&4\mathrm{cov}(X^2,Y^2) +2\tau_0(XYXY)\\
&= 4\sum_{k=1}^{m\wedge n}\left\| f\overset{k}{\frown}  g\right\|^2 +2\mathrm{Re} \sum_{k=0}^{m\wedge n}\left\langle  f\overset{k}{\frown}  g , g\overset{k}{\frown}  f    \right\rangle \\
&=2\sum_{k=1}^{m\wedge n-1}\left\| f\overset{k}{\frown}  g\right\|^2 +\sum_{k=1}^{m\wedge n}\left\| f\overset{k}{\frown}  g+ g\overset{k}{\frown}  f\right\|^2 +2\| f\overset{m\wedge n}{\frown} g\|^2+ 2\mathrm{Re}\langle  f\otimes  g, g\otimes  f\rangle.
   \end{align*}
   The positivity of $4\mathrm{cov}(X^2,Y^2)+2\tau_0(XYXY)$ follows from the following identity
   \[2\| f\overset{m\wedge n}{\frown} g\|^2+ 2\mathrm{Re}\langle  f\otimes  g, g\otimes  f\rangle = \| f\overset{m\wedge n}{\frown}  g+ g\overset{m\wedge n}{\frown}  f\|^2.\]
   To see this, we may assume $m>n$ and show
   \[\langle f\otimes  g, g\otimes  f\rangle=\langle  g\overset{n}{\frown} f, f\overset{n}{\frown} g\rangle. \]
   We can compute left hand side as follows. 
    \begin{align*}
       \langle  f \otimes  g,  g \otimes  f\rangle=\int_{\mathbb{R}_+^{m+n}} f(\vec{s},\vec{t}) g(\vec{r})\cdot \overline{ g(\vec{s}) f(\vec{t},\vec{r})} \ d\vec{s}d\vec{t}d\vec{r}. 
   \end{align*}
   Since $ g= g^*$, this integral is equal to
   \[\int_{\mathbb{R}_+^{m-n}}d\vec{t}\int_{\mathbb{R}_+^n} g(\vec{s}^*) f(\vec{s},\vec{t})d\vec{s} \cdot \int_{\mathbb{R}_+^n}\overline{ f(\vec{t},\vec{r}) g(\vec{r}^*)} d\vec{r}, \]
   which is equal to $\langle  g\overset{n}{\frown} f, f\overset{n}{\frown} g\rangle$.
   \end{proof}

  \begin{thm}\label{fourthmoment}
     Let $X_k= I_0( f_k)$ and $Y_k= I_0( g_k)$ with $ f_k=f_k^* \in H^{\otimes m}$ and $ g_k=g_k^* \in H^{\otimes n}$ where $\tau_0(X_k^2)+\tau_0(Y_k^2)\to \sigma^2$ as $k\to \infty$ and $m,n\in \mathbb{N}$ have opposite parities.  
     Then the following are equivalent: \begin{enumerate}
          \item $\lim_{k\to \infty}\kappa_4(X_k+Y_k)=0$.
          \item  $\displaystyle{\lim_{k\to\infty} f_k \overset{p}{\frown} f_k = 0}$ in $H^{\otimes 2(m-p)}$ for $p=1,\ldots,m-1$ and $\displaystyle{\lim_{k\to\infty} g_k \overset{p}{\frown} g_k = 0}$ in $H^{\otimes 2(n-p)}$ for $p=1,\ldots,n-1$.
         \item $X_k+Y_k$ converges in distribution toward $ S(0,\sigma^2)$. 
     \end{enumerate}
      
  \end{thm}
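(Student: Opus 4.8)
The plan is to prove the cycle $(1)\Rightarrow(2)\Rightarrow(3)\Rightarrow(1)$, with Lemma \ref{keylemma} supplying the algebraic heart of the first implication and the single-variable theorem (Theorem \ref{thm.KNPS}) doing most of the analytic work in the third.

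\textbf{$(1)\Rightarrow(2)$.} This falls out of Lemma \ref{keylemma} with essentially no extra work. The lemma writes $\kappa_4(X_k+Y_k)$ as a sum of the three \emph{nonnegative} quantities $\kappa_4(X_k)$, $\kappa_4(Y_k)$, and $4\,\mathrm{cov}(X_k^2,Y_k^2)+2\tau_0(X_kY_kX_kY_k)$. Hence $0\le\kappa_4(X_k)\le\kappa_4(X_k+Y_k)\to 0$, and the computation inside the proof of Lemma \ref{keylemma} identifies $\kappa_4(X_k)=\sum_{p=1}^{m-1}\|f_k\overset{p}{\frown}f_k\|^2$; so each self-contraction of $f_k$ tends to $0$, and symmetrically for $g_k$, which is precisely condition (2). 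It is exactly here that opposite parity enters, since that is what guarantees (through the vanishing of $\tau_0(X^3Y)$ and $\tau_0(XY^3)$) the clean three-term splitting of $\kappa_4(X_k+Y_k)$.

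\textbf{$(2)\Rightarrow(3)$.} I would first record the technical estimate that drives everything: for mirror symmetric kernels and $1\le p\le m\wedge n$,
\[ \|f\overset{p}{\frown}g\|^2\ \le\ \|f\overset{m-p}{\frown}f\|\cdot\|g\overset{n-p}{\frown}g\|. \]
This comes from writing $\|f\overset{p}{\frown}g\|^2$ as an integral, separating the $f$--$\bar f$ variables from the $g$--$\bar g$ variables (using $f=f^\ast$, $g=g^\ast$), recognizing the two blocks as the self-contractions $f\overset{m-p}{\frown}f$ and $g\overset{n-p}{\frown}g$ in $H^{\otimes 2p}$, and applying Cauchy--Schwarz. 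Because $m\ne n$, for every $1\le p\le m\wedge n$ the exponent on the side of the larger order is strictly interior (in $\{1,\dots,m-1\}$, say), so that factor $\to 0$ by (2) while the other stays bounded by $\|f_k\|^2$ or $\|g_k\|^2$; hence all mixed contractions $f_k\overset{p}{\frown}g_k\to 0$. Now pass to a subsequence along which $\tau_0(X_k^2)=\|f_k\|^2\to\sigma_X^2$ and $\tau_0(Y_k^2)=\|g_k\|^2\to\sigma_Y^2$ with $\sigma_X^2+\sigma_Y^2=\sigma^2$. Condition (2) applied to $f_k$ alone gives, by the rescaled form of Theorem \ref{thm.KNPS}, that $X_k\to S(0,\sigma_X^2)$, and likewise $Y_k\to S(0,\sigma_Y^2)$. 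Since $m\ne n$ the two chaoses are orthogonal, so $\tau_0(X_kY_k)=0$; combined with the vanishing of every mixed contraction, expanding $\tau_0\big((X_k+Y_k)^r\big)$ through the contraction formula shows that only the non-crossing pair partitions survive in the limit, so $X_k+Y_k$ converges to the sum of a \emph{free} semicircular pair, i.e.\ to $S(0,\sigma_X^2+\sigma_Y^2)=S(0,\sigma^2)$. (Equivalently, one may quote the multivariate free Fourth Moment Theorem: componentwise semicircular convergence together with the convergent, here diagonal, covariance forces joint convergence to the free semicircular family.) As the limit is independent of the subsequence, the whole sequence converges.

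\textbf{$(3)\Rightarrow(1)$.} The variables $X_k,Y_k$ live in fixed Wigner chaoses, so a Haagerup-type inequality bounds their operator norms by a constant times their $L^2$-norms; since $\|f_k\|,\|g_k\|$ are bounded, the spectral distributions of $X_k+Y_k$ are supported in a common compact interval. Weak convergence to the moment-determinate law $S(0,\sigma^2)$ therefore upgrades to convergence of all moments; in particular $\tau_0\big((X_k+Y_k)^4\big)\to 2\sigma^4$, while $\tau_0\big((X_k+Y_k)^2\big)=\tau_0(X_k^2)+\tau_0(Y_k^2)\to\sigma^2$, whence $\kappa_4(X_k+Y_k)=\tau_0\big((X_k+Y_k)^4\big)-2\tau_0\big((X_k+Y_k)^2\big)^2\to 2\sigma^4-2\sigma^4=0$. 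The real obstacle is $(2)\Rightarrow(3)$: promoting the vanishing of self-contractions to full distributional convergence of the \emph{sum}. The mixed-contraction estimate is the crucial lever, and it is precisely the opposite-parity hypothesis ($m\ne n$) that prevents any $f_k\overset{p}{\frown}g_k$ from degenerating into an uncontrolled full contraction; the remaining bookkeeping to isolate the non-crossing pairings (or the appeal to the multivariate free theorem) is then routine.
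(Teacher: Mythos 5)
Your proposal is correct and its skeleton coincides with the paper's proof: the implication $(1)\Rightarrow(2)$ via the nonnegativity of the three terms in Lemma \ref{keylemma} and the identity $\kappa_4(X_k)=\sum_{p=1}^{m-1}\|f_k\overset{p}{\frown}f_k\|^2$ is exactly the paper's argument, as is $(3)\Rightarrow(1)$ via Haagerup-type uniform boundedness upgrading weak convergence to moment convergence. The one place you diverge is $(2)\Rightarrow(3)$: after passing to a subsequence with $\tau_0(X_k^2)\to\sigma_X^2$ and $\tau_0(Y_k^2)\to\sigma_Y^2$, the paper simply invokes the multivariate free Fourth Moment Theorem (Theorem 1.3 of the cited work of Nourdin--Peccati--Speicher--Taqqu type), which says that componentwise vanishing of self-contractions plus convergent covariances forces the pair $(X_k,Y_k)$ to be asymptotically free semicircular, whence the sum is $S(0,\sigma^2)$. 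Your primary route instead establishes the mixed-contraction bound $\|f\overset{p}{\frown}g\|^2\le\|f\overset{m-p}{\frown}f\|\,\|g\overset{n-p}{\frown}g\|$ (correct, and the opposite-parity observation that $m-p$ stays in $\{1,\dots,m-1\}$ is the right point) and then sketches a direct combinatorial expansion showing only non-crossing pairings survive; as written that expansion is the under-detailed step --- making it rigorous requires the full bookkeeping of how higher moments decompose into iterated contractions --- but since you also offer the appeal to the multivariate theorem as an equivalent finish, the proof stands. The citation route buys brevity; your explicit estimate buys a more self-contained argument and makes visible where the parity hypothesis does its work in this implication.
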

  \begin{proof}
  
     Note that since $\{X_k+Y_k\}_{k\in \mathbb{N}}$ are uniformly bounded operators on $n$ (i.e., their spectral measures are compactly supported) by the Haagerup inequality $\|X_k+Y_k\|\le c \|X_k+Y_k\|_2$ for some constant $c>0$ (for example \cite[Theorem 5.3.4]{MR1660906}), convergence in moments is equivalent to convergence in distribution. Since a semicircle distribution $s$ satisfies $\kappa_4(s)=0$, we see that (3) implies (1). By Lemma \ref{keylemma}, if $\lim_{k\to \infty}\kappa_4(X_k+Y_k)=0$, then we have \[\lim_{k\to \infty}\kappa_4(X_k)=\lim_{k\to \infty}\kappa_4(Y_k)=0.\] Since $\kappa_4(X_k)=\sum_{p=1}^{m-1} \|f_k\overset{p}{\frown}f_k\|^2$ in the proof of Lemma \ref{keylemma}, we have $\lim_{k\to \infty}f_k\overset{p}{\frown}f_k=0 $ in $H^{\otimes 2(m-p)}$ for $p=1,\ldots,m-1$. Similarly, we have $\lim_{k\to \infty}g_k\overset{p}{\frown}g_k=0 $ in $H^{\otimes 2(m-p)}$ for $p=1,\ldots,m-1$, which shows (1) implies (2). To see (2) implies (3), it is enough to check if all convergent subsequences of  $\{X_k+Y_k\}_{k\in \mathbb{N}}$ converge in distribution to the $S(0,\sigma^2)$ by uniform boundedness. Let $\{X_{k_l}+Y_{k_l}\}_{
      l\in \mathbb{N}}$ be a convergent subsequence. Since $\tau_0(X_{k_l}^2)+\tau_0(Y_{k_l}^2)\to \sigma^2$ as $l \to \infty$, we can take convergent subsequence of $\{\tau_0(X_{k_l}^2)\}_{l\in \mathbb{N}}$ and $\{\tau_0(Y_{k_l}^2)\}_{l\in \mathbb{N}}$. We use the same notation $\{\tau_0(X_{k_l}^2)\}_{l\in \mathbb{N}}$, $\{\tau_0(Y_{k_l}^2)\}_{l\in \mathbb{N}}$ for convergent subsequesnces, and we denote their limit  by $\sigma^2_X$, $\sigma_Y^2$.  
      Note that we have $\tau_0(X_{k_l}Y_{k_l})=0$ for any $l$ since their degrees are different. If we assume (2), which is equivalent to $\lim_{l\to \infty}\kappa_4(X_{k_l})=\lim_{l\to \infty}\kappa_4(Y_{k_l})=0$, then by Theorem 1.3 in \cite{MR3380342},
      we can see that  $(X_{k_l},Y_{k_l})_{l\in \mathbb{N}}$ are asymptotically free semicircles with variance $(\sigma_X^2,\sigma_Y^2)$. Since $\sigma_X^2+\sigma_Y^2=\sigma^2$, $\{X_{k_l}+Y_{k_l}\}_{l \in \mathbb{N}}$ converges in distribution into $S(0,\sigma^2)$, which concludes the proof.
  \end{proof}

  \section{Proof for $q$-Wigner chaos\label{q-case}}
  We start with proving the $q$-analog of Lemma \ref{keylemma}.
 
  \begin{lm}\label{q-keylemma}
      Let $0\le q \le 1$ and $ f \in H^{\otimes m},  g \in H^{\otimes n}$ be fully symmetric, and let $X=I_q(f)$ and $Y_k=I_q(g)$ be the $q$-Wigner integrals.
      We set $\| f\|_{\mathcal{F}_q(H)}=\sigma_X$ and $\| g\|_{\mathcal{F}_q(H)}=\sigma_Y$. Assume $m,n$ have opposite parities. Then, we have 
      \[\tau_q[(X+Y)^4]-(2+q^{m^2})\sigma_X^4-(2+q^{n^2})\sigma_Y^4 -(4+2q^{mn})\sigma_X^2\sigma_Y^2
      =\kappa_X+\kappa_Y +\kappa_{X,Y}
      \]
      where 
     \begin{align*}
         \kappa_X&=\tau_q(X^4)-(2+q^{m^2})\sigma_X^4\\
         \kappa_Y&=\tau_q(Y^4)-(2+q^{n^2})\sigma_Y^4\\
         \kappa_{X,Y}&=4\tau_q(X^2Y^2)+2\tau_q(XYXY)-(4+2q^{mn})\sigma_X^2\sigma_Y^2\\
         &=4\mathrm{cov}(X^2,Y^2)+2(\tau_q[XYXY]-q^{mn}\sigma_X^2\sigma_Y^2).
     \end{align*}
     Moreover, $\kappa_X,\kappa_Y,\kappa_{X,Y}\ge 0$.
  \end{lm}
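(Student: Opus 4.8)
The plan is to transplant the proof of Lemma~\ref{keylemma} into the $q$-deformed setting, replacing the free contraction formula and the reflection identity by their $q$-analogues, and carefully tracking the weights $q^{\mathrm{inv}(\cdot)}$ produced by the symmetrization operators $P_q^{(m)}$ and $R_{k,m}$.

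I would first prove the displayed identity by expanding $\tau_q[(X+Y)^4]$ into its sixteen words in $X,Y$ and collecting cyclically equivalent ones via the trace property of $\tau_q$. The words with three letters of one kind and one of the other vanish: by the contraction formula $X^3Y$ and $XY^3$ have chaos components only in degrees $3m+n-2k$ and $m+3n-2k$, which are odd because $m,n$ have opposite parity, so these operators carry no scalar ($0$-particle) component and their traces are $0$. What remains is $\tau_q(X^4)+\tau_q(Y^4)+4\tau_q(X^2Y^2)+2\tau_q(XYXY)$, and subtracting the prescribed constants gives $\kappa_X+\kappa_Y+\kappa_{X,Y}$ by definition; the second form of $\kappa_{X,Y}$ follows from $\tau_q(X^2)=\sigma_X^2$, $\tau_q(Y^2)=\sigma_Y^2$, and $\mathrm{cov}(X^2,Y^2)=\tau_q(X^2Y^2)-\sigma_X^2\sigma_Y^2$.

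For $\kappa_X,\kappa_Y\ge 0$ I would invoke the single-chaos fourth-moment inequality of Deya--Norredine--Nourdin (Theorem~\ref{thm.DNN}): expanding $X^2=\sum_{k=0}^m I_q(f\overset{k}{\frown}_q f)$ and using orthogonality of distinct particle spaces gives $\tau_q(X^4)=\sum_{k=0}^m\|f\overset{k}{\frown}_q f\|_{\mathcal{F}_q}^2$, whose extreme terms $k=0$ and $k=m$ supply precisely $(2+q^{m^2})\sigma_X^4$ — here the weight $q^{m^2}$ is $q^{\mathrm{inv}}$ of the permutation flipping the two length-$m$ blocks of $f\otimes f$, which has $m^2$ inversions — and whose intermediate terms are nonnegative. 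For the central claim $\kappa_{X,Y}\ge 0$ I would mirror the endgame of Lemma~\ref{keylemma}. Using $\tau_q(X^2Y^2)=\|XY\Omega\|_{\mathcal{F}_q}^2=\sum_{k=0}^{m\wedge n}\|f\overset{k}{\frown}_q g\|_{\mathcal{F}_q}^2$ and the real identity $\tau_q(XYXY)=\sum_{k=0}^{m\wedge n}\langle f\overset{k}{\frown}_q g,\,g\overset{k}{\frown}_q f\rangle_q$, together with $\|f\overset{k}{\frown}_q g\|_{\mathcal{F}_q}=\|g\overset{k}{\frown}_q f\|_{\mathcal{F}_q}$ (from $(f\overset{k}{\frown}_q g)^*=g\overset{k}{\frown}_q f$ and anti-unitarity of $*$), I would convert each cross term at level $1\le k\le m\wedge n$ into a square by $2\mathrm{Re}\langle a,b\rangle=\|a+b\|_{\mathcal{F}_q}^2-2\|a\|_{\mathcal{F}_q}^2$. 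This collects $\sum_{k\ge 1}\big(2\|f\overset{k}{\frown}_q g\|_{\mathcal{F}_q}^2+\|f\overset{k}{\frown}_q g+g\overset{k}{\frown}_q f\|_{\mathcal{F}_q}^2\big)\ge 0$ and leaves the level-$0$ remainder
\[ 4\|f\otimes g\|_{\mathcal{F}_q}^2+2\mathrm{Re}\langle f\otimes g,\,g\otimes f\rangle_q-(4+2q^{mn})\sigma_X^2\sigma_Y^2. \]

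This remainder is where I expect the real work, and where the $q$-deformation genuinely departs from the free case. Writing its two ingredients as $[m]_q![n]_q!\langle\,\cdot\,,R\,\cdot\,\rangle$ (with $R$ the appropriate reduced symmetrizer) via the factorization $P_q^{(m+n)}=R_{m,m+n}(P_q^{(m)}\otimes P_q^{(n)})$ and expanding over minimal coset representatives of $S_m\times S_n$ in $S_{m+n}$, the identity coset contributes exactly $\sigma_X^2\sigma_Y^2$ to $\|f\otimes g\|_{\mathcal{F}_q}^2$, while the single block-flip coset — which carries $mn$ inversions — contributes $q^{mn}\sigma_X^2\sigma_Y^2$ to $\langle f\otimes g, g\otimes f\rangle_q$; these are exactly the two subtracted constants. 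The difficulty is that the remaining intermediate cosets produce cross terms of indefinite sign, so one cannot argue termwise, and the level-$0$ remainder is not nonnegative on its own (already at $q=0$ it equals $2\mathrm{Re}\langle f\otimes g,g\otimes f\rangle$, becoming a square only after being folded back into the deepest contraction via the reflection identity $\langle f\otimes g,g\otimes f\rangle=\langle g\overset{m\wedge n}{\frown}f,\,f\overset{m\wedge n}{\frown}g\rangle$ of Lemma~\ref{keylemma}). The crux is therefore a $q$-\emph{reflection identity}: I would index the intermediate cosets by their number of $f$--$g$ crossings, identify each crossing level with a contraction $f\overset{k}{\frown}_q g$ through the fully-symmetric formula $f\overset{k}{\frown}_q g=[k]_q!\binom{m}{k}_q\binom{n}{k}_q\,f\overset{k}{\frown}g$, and show that, after extracting the flip term $q^{mn}\sigma_X^2\sigma_Y^2$, all residual cross terms reassemble with the genuine contractions into nonnegative squares. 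Carrying the inversion statistic correctly through this coset bookkeeping is the main obstacle.
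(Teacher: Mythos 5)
Your reduction is correct and matches the paper's up to the point where everything hinges on the level-$0$ remainder $4\|f\otimes g\|_{\mathcal{F}_q(H)}^2+2\mathrm{Re}\langle f\otimes g,g\otimes f\rangle_q-(4+2q^{mn})\sigma_X^2\sigma_Y^2$; but there you stop, announce a ``main obstacle,'' and — worse — assert that the intermediate cosets produce cross terms of indefinite sign so that one cannot argue termwise. That assertion is false under the full-symmetry hypothesis, and the termwise argument you rule out is exactly the paper's proof. The key observation you are missing is this: writing $\|f\otimes g\|^2_{\mathcal{F}_q(H)}=[m]_q![n]_q!\sum_{\pi}q^{\mathrm{inv}(\pi)}\langle f\otimes g,\pi(f\otimes g)\rangle$ over minimal coset representatives $\pi\in\quotient{S_{m+n}}{S_m\times S_n}$, each individual pairing is itself a perfect square. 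Indeed, partitioning $\{1,\dots,m+n\}$ according to whether $i$ and $\pi^{-1}(i)$ land in the first $m$ or last $n$ slots and using that $f$ and $g$ are invariant under permutation of their arguments, one gets
\[ \langle f\otimes g,\pi(f\otimes g)\rangle=\bigl\| f\overset{\alpha(\pi)}{\frown}g\bigr\|^2_{L^2(\mathbb{R}_+^{m+n-2\alpha(\pi)})}\ \ge\ 0, \]
where $\alpha(\pi)$ is the number of $f$-slots sent to $g$-slots. Since $q\ge 0$, every weight $q^{\mathrm{inv}(\pi)}$ is nonnegative, the identity coset contributes exactly $\sigma_X^2\sigma_Y^2$, and hence $\|f\otimes g\|^2_{\mathcal{F}_q(H)}\ge\sigma_X^2\sigma_Y^2$ with no cancellation needed. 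The entirely analogous computation for $\langle f\otimes g,\pi(g\otimes f)\rangle=\|f\overset{\beta(\pi)}{\frown}g\|^2$ isolates the block-flip coset (with $mn$ inversions) and gives $\langle f\otimes g,g\otimes f\rangle_q\ge q^{mn}\sigma_X^2\sigma_Y^2$. These two inequalities finish $\kappa_{X,Y}\ge 0$ directly; no ``$q$-reflection identity'' folding the level-$0$ terms back into the $k=m\wedge n$ contraction is required.

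Your intuition that the remainder cannot be handled on its own is imported from the free Lemma \ref{keylemma}, where the kernels are only mirror symmetric and $\langle f\otimes g,g\otimes f\rangle$ genuinely needs to be combined with $2\|f\overset{m\wedge n}{\frown}g\|^2$ to form a square. Under full symmetry even your $q=0$ test case collapses: $\langle f\otimes g,g\otimes f\rangle=\|f\overset{m\wedge n}{\frown}g\|^2\ge 0$ already. So the gap is not mere bookkeeping of the inversion statistic — it is the structural fact that full symmetry converts every coset cross term into a nonnegative square, which is precisely where the hypothesis ``fully symmetric'' (as opposed to mirror symmetric) earns its keep in the $q$-deformed setting. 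Everything else in your write-up (the vanishing of $\tau_q(X^3Y)$ and $\tau_q(XY^3)$ by parity, the orthogonality expansion of $\tau_q(X^2Y^2)$ and $\tau_q(XYXY)$, the completion of squares at levels $k\ge 1$, and the citation of the single-chaos result for $\kappa_X,\kappa_Y\ge 0$) agrees with the paper.
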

\begin{proof}
   As in the proof of Lemma \ref{keylemma}, we have
   \[\tau_q[(X+Y)^4]=\tau_q(X^4)+\tau_q(Y^4)+4\tau_q(X^2Y^2)+2\tau_q(XYXY).\]
   By Proposition 3.2 in \cite{MR3089666}, we have $\kappa_X,\kappa_Y\ge 0$ when $q\ge 0$ and $ f, g$ are fully symmetric.
   By contraction formula, we have 
   \[XY=\sum_{k=0}^{m\wedge n}I_q( f\overset{k}{\frown_q} g),\]
   and 
   \[\tau_q(X^2Y^2)=\|XY\|_2^2=\sum_{k=0}^{m\wedge n}\| f\overset{k}{\frown_q} g\|_{\mathcal{F}_q(H)}^2.\]
   Similarly, we have
   \[\tau_q(XYXY)=\sum_{k=0}^{m\wedge n}\langle  f \overset{k}{\frown_q} g, g \overset{k}{\frown_q} f \rangle_q. \]
   Since $( f \overset{k}{\frown_q} g)^*= g^* \overset{k}{\frown_q} f^*= g \overset{k}{\frown_q} f$ and $\ast$ is anti-unitary operator with respect to $q$-inner product, $4\tau_q(X^2Y^2)+2\tau_q(XYXY)$ is equal to
   \[4\| f\otimes  g\|^2_{\mathcal{F}_q(H)}+2\langle  f \otimes  g, g \otimes  f\rangle_q+2\sum_{k=1}^{m\wedge n}\| f\overset{k}{\frown_q} g\|_{\mathcal{F}_q(H)}^2+2\sum_{k=1}^{m\wedge n}\| f\overset{k}{\frown_q} g+ g\overset{k}{\frown_q} f\|_{\mathcal{F}_q(H)}^2.\]
   Finally, we prove
   \[ 4\| f\otimes  g\|_{\mathcal{F}_q(H)}^2\ge 4\sigma_X^2\sigma_Y^2,\quad 2\langle  f \otimes  g, g \otimes  f\rangle_q\ge 2q^{mn}\sigma_X^2\sigma_Y^2,\]
   which implies $\kappa_{X,Y}\ge 0$. By changing $ f$ and $ g$ if necessary, we may assume $m>n$.
   Note that 
   \begin{align*}
       \| f\otimes  g\|^2_{\mathcal{F}_q(H)}& = \langle  f\otimes  g,P_{q}^{(m+n)} f \otimes  g \rangle\\
       &=\langle  f\otimes  g,R_{m,m+n}(P_q^{(m)} f \otimes P_q^{(n)} g) \rangle \\
       &=\sigma_X^2\sigma_Y^2 +\langle  f\otimes  g,(R_{m,m+n}-I_{m+n})(P_q^{(m)} f \otimes P_q^{(n)} g) \rangle.
   \end{align*}
   Since $ f$ and $ g$ are fully symmetric, we have $P_q^{(m)} f=[m]_q!  f$ and $P_q^{(n)} g=[n]_q!  g$. Thus, it suffices to check the positivity of $\langle  f\otimes  g,(R_{m,m+n}-I_{m+n})( f \otimes  g) \rangle$. Note that
   \[(R_{m,m+n}-I_{m+n})( f \otimes  g)(\vec{t})=\sum_{\substack{\pi \in \quotient{S_{m+n}}{S_m\times S_n}\\ \pi \neq 1}}q^{\mathrm{inv(\pi)}}( f \otimes  g)(\pi^{-1}(\vec{t})). \]
   For each $\pi \in S_{m+n}$, we can decompose $\{1,2,\ldots,m+n\}$ into four disjoint sets
   \begin{align*}A_1=\{ i\le m |\ \pi^{-1}(i)>m\}&,\qquad A_2=\{i\le m|\ \pi^{-1}(i)\le m\},\\ A_3=\{i>m |\ \pi^{-1}(i)>m\}&,\qquad
   A_4=\{ i>m|\ \pi^{-1}(i)\le m\}.
   \end{align*}
    We set $\vec{t}_{A_i}=(t_{i_1},\ldots,t_{i_j})$ for $A_i=\{i_1<i_2\cdots <i_j\}$ ($i=1,2,3,4$). Since $ f$ and $ g$ are fully symmetric, we have
    \begin{align*}
    ( f\otimes  g) (\vec{t})&=  f(\vec{t}_{A_1},\vec{t}_{A_2}) g(\vec{t}_{A_3},\vec{t}_{A_4})\\
    ( f\otimes  g)(\pi^{-1}(\vec{t}))&= f(\vec{t}_{A_2},\vec{t}_{A_4}) g(\vec{t}_{A_1},\vec{t}_{A_3}).
    \end{align*}
   By using symmetry of $ f$ and $ g$ again, this implies 
    \[\langle  f \otimes  g, \pi( f\otimes  g)\rangle =\| f \overset{\alpha(\pi)}{\frown} g\|^2_{L^2(\mathbb{R}_+^{m+n-2\alpha(\pi)})}. \]
    where we set $\alpha(\pi)=\#A_1=\#A_4$.
    Since we assume $q$ is non-negative, we can see
    \[\sum_{\substack{\pi \in \quotient{S_{m+n}}{S_m\times S_n}\\ \pi \neq 1}}q^{\mathrm{inv(\pi)}}\| f \overset{\alpha(\pi)}{\frown} g\|^2_{L^2(\mathbb{R}_+^{m+n-2\alpha(\pi)})} \ge 0,\]
    which implies $\| f \otimes  g\|^2_{\mathcal{F}_q(H)}\ge \sigma_X^2\sigma_X^2$. 
    
    To see $\langle  f \otimes  g, g \otimes  f\rangle_q\ge q^{mn}\sigma_X^2\sigma_Y^2$, we compute $\langle  f \otimes  g, g \otimes  f\rangle_q$ as follows
    \begin{align*}
        \langle  f \otimes  g, g \otimes  f\rangle_q&=\langle  f \otimes  g,P^{(m+n)}_q( g \otimes  f)\rangle\\
        &=\langle  f \otimes  g,R_{n,m+n}(P^{(n)}_q g \otimes P^{(m)}_q f)\rangle\\
        &=q^{mn}\sigma_X^2\sigma_Y^2+[m]_q![n]_q!\langle  f \otimes  g,(R_{n,m+n}-q^{mn}\sigma_{n,m})( g \otimes  f)\rangle,
    \end{align*}
    where $\sigma_{n,m}\in \quotient{S_{m+n}}{S_n\times S_m}$ is the flip of $H^{\otimes n}\otimes H^{\otimes m}$, namely $\sigma_{n,m}( g \otimes  f) = f \otimes  g $ for $ f \in H^{\otimes m}$. Note that $\mathrm{inv}(\sigma_{n,m})=q^{mn}$ and 
    \[(R_{n,m+n}-q^{mn}\sigma_{n,m})( g \otimes  f)(\vec{t})= \sum_{\substack{\pi \in \quotient{S_{m+n}}{S_n\times S_m}\\ \pi \neq \sigma_{n,m}}}q^{\mathrm{inv}(\pi)} ( g \otimes  f)(\pi^{-1}(\vec{t})).\]
    In this case, for each $\pi \in S_{m+n}$, we decompose $\{1,\ldots,m+n\}$ into the following six sets
    \begin{align*}
        B_1=\{i\le n|\ \pi^{-1}(i)\le n\}&, \qquad B_2=\{i\le n|\  \pi^{-1}(i)>n\},\\
        B_3=\{n< i\le m|\ \pi^{-1}(i)\le n\}&,\qquad B_4=\{n< i\le m|\  \pi^{-1}(i)>n\},\\
        B_5=\{i>m|\  \pi^{-1}(i)\le n\}&,\qquad B_6=\{ i>m|\ \pi^{-1}(i)>n\}.
    \end{align*}
    Since $ f$ and $ g$ are fully symmetric, we have
    \begin{align*}
    ( f\otimes  g)(\vec{t})&= f(\vec{t}_{B_1},\vec{t}_{B_2},\vec{t}_{B_3},\vec{t}_{B_4}) g(\vec{t}_{B_5},\vec{t}_{B_6})\\
   ( g \otimes  f)(\pi^{-1}(\vec{t})) &= g(\vec{t}_{B_1},\vec{t}_{B_3},\vec{t}_{B_5}) f(\vec{t}_{B_2},\vec{t}_{B_4},\vec{t_{B_6}}).
    \end{align*}
    Thus, we have
    \[ \langle  f \otimes  g,\pi( g \otimes  f)\rangle=\| f\overset{\beta(\pi)}{\frown} g\|^2_{L^2(\mathbb{R}_+^{m+n-2\beta(\pi)})}\]
    where we set $\beta(\pi)=\#B_1+\#B_3=\#B_6$.
    Since $q$ is non-negative, we obtain
     \[\sum_{\substack{\pi \in \quotient{S_{m+n}}{S_n\times S_m}\\ \pi \neq \sigma_{n,m}}}q^{\mathrm{inv}(\pi)}\| f\overset{\beta(\pi)}{\frown} g\|^2_{L^2(\mathbb{R}_+^{m+n-2\beta(\pi)})}\ge 0 \]
     which implies $\langle  f \otimes  g, g \otimes  f\rangle_q\ge q^{mn}\sigma_X^2\sigma_Y^2$.
\end{proof}
Before we state the fourth moment theorem for this case, we define the following distribution which corresponds with the limit distribution of our fourth moment theorem for the $q$-case. 
\begin{dfn}\label{mixedQ}
    Let $Q=(q_{ij})_{1\le i,j \le d}\in M_d(\mathbb{R})$ be a symmetric matrix with $\max|q_{ij}|\le 1$. Then the mixed $Q$-Gaussian $X_v$ with respect to $v \in \mathbb{R}^d$ defined by its moment   
    \[\tau_q[X_v^{n}]=\sum_{\alpha:[n]\to [d]}\sum_{\pi \in P_2(n)} \prod_{\substack{i<j<k<l\\ (i,k),(j,l) \in \pi}}q_{\alpha(i)\alpha(j)} \prod_{(i,j)\in \pi}v_{\alpha(i)}^2\delta_{\alpha(i)\alpha(j)}.\]
\end{dfn}
\begin{rem}
This definition comes from the mixed $Q=(q_{ij})$-relations in \cite{MR1289833},
\[c_i^*c_j-q_{ij}c_jc_i^*=\delta_{ij}.\] For $v={}^t(v_1,\ldots,v_d)$, we can see that $X_v$ has the same distribution as $\sum_{i=1}^dv_i X_i$ where $X_i=c_i+c_i^*$. It is known that, when $\max|q_{ij}|<1$, each $c_i$ is a bounded operator, which implies $X_v$ is also bounded. 
   
\end{rem}
Now, we prove the $q$-analog of \cite{basseoconnor2025fourthmoment}. Since their result is about the case of $q=1$, we consider the case of $0\le q<1$. 
  \begin{thm}
      Let $0\le q<1$ and $m,n \in \mathbb{N}$ have opposite parities. Let $\{ f_k\}_{k\in \mathbb{N}} \subset H^{\otimes m}, \{ g_k\}_{k\in \mathbb{N}} \subset H^{\otimes n}$ be fully symmetric, and let $X_k=I_q( f_k)$ and $Y_k=I_q( g_k)$ be the $q$-Wigner integrals. Assume $\|f_k\|_{\mathcal{F}_q(H)}$ and $\|g_k\|_{\mathcal{F}_q(H)}$ converge and define 
      \[ \sigma_X=\lim_{k \to \infty}\| f_k\|_{\mathcal{F}_q(H)} \quad \& \quad  \sigma_Y=\lim_{k \to \infty}\| g_k\|_{\mathcal{F}_q(H)}.\]
      Then the following are equivalent
      \begin{enumerate}
          \item $\lim_{k \to \infty}\tau_q[(X_k+Y_k)^4]=(2+q^{m^2})\sigma_X^4+(2+q^{n^2})\sigma_Y^4 +(4+2q^{mn})\sigma_X^2\sigma_Y^2$.
           \item $\displaystyle{\lim_{k\to\infty} f_k \overset{p}{\frown} f_k = 0}$ in $H^{\otimes 2(m-p)}$ for $p=1,\ldots,m-1$ and $\displaystyle{\lim_{k\to\infty} g_k \overset{p}{\frown} g_k = 0}$ in $H^{\otimes 2(n-p)}$ for $p=1,\ldots,n-1$.
          \item $X_k+Y_k $ converges in distribution toward the mixed $Q$-Gaussian with respect to $\begin{pmatrix}
              \sigma_X\\ \sigma_Y
          \end{pmatrix}$
          where 
          \[Q=\begin{pmatrix}
              q^{m^2} & q^{mn} \\
              q^{mn} & q^{n^2}
          \end{pmatrix}.\]
       \end{enumerate}
  \end{thm}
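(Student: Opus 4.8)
The plan is to prove the cyclic chain of implications $(3)\Rightarrow(1)\Rightarrow(2)\Rightarrow(3)$, paralleling the proof of Theorem~\ref{fourthmoment} but with Lemma~\ref{q-keylemma} in place of Lemma~\ref{keylemma}. As a preliminary reduction, since $0\le q<1$ the creation operators are bounded, and the $q$-analog of the Haagerup/ultracontractive inequality (cf.\ \cite{MR1463036,Kemp2005}) bounds the operator norm of a fixed-order $q$-Wigner integral by a constant multiple of its $L^2$-norm. Because $\|f_k\|_{\mathcal{F}_q(H)}$ and $\|g_k\|_{\mathcal{F}_q(H)}$ converge, the operators $X_k+Y_k$ have uniformly bounded operator norms, so convergence in $\ast$-distribution is equivalent to convergence of all moments $\tau_q[(X_k+Y_k)^r]$. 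The implication $(3)\Rightarrow(1)$ is then immediate once one checks that the fourth moment of the mixed $Q$-Gaussian attached to $(\sigma_X,\sigma_Y)$ is exactly the right-hand side of $(1)$: this is a direct evaluation of Definition~\ref{mixedQ} at $n=4$, $d=2$, where the two non-crossing pairings each contribute $(\sigma_X^2+\sigma_Y^2)^2$ and the single crossing pairing contributes $q^{m^2}\sigma_X^4+q^{n^2}\sigma_Y^4+2q^{mn}\sigma_X^2\sigma_Y^2$.

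For $(1)\Rightarrow(2)$ I would invoke the positivity half of Lemma~\ref{q-keylemma}. Writing $\sigma_X(k)=\|f_k\|_{\mathcal{F}_q(H)}$ and $\sigma_Y(k)=\|g_k\|_{\mathcal{F}_q(H)}$, the lemma gives
\[\tau_q[(X_k+Y_k)^4]-(2+q^{m^2})\sigma_X(k)^4-(2+q^{n^2})\sigma_Y(k)^4-(4+2q^{mn})\sigma_X(k)^2\sigma_Y(k)^2=\kappa_X(k)+\kappa_Y(k)+\kappa_{X,Y}(k),\]
with all three summands nonnegative. Since $\sigma_X(k)\to\sigma_X$ and $\sigma_Y(k)\to\sigma_Y$, condition $(1)$ forces $\kappa_X(k),\kappa_Y(k),\kappa_{X,Y}(k)\to 0$ individually. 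In particular $\tau_q(X_k^4)\to(2+q^{m^2})\sigma_X^4$ while $\tau_q(X_k^2)\to\sigma_X^2$, which (after normalizing when $\sigma_X\neq 0$, and trivially when $\sigma_X=0$ since then $\|f_k\overset{p}{\frown}f_k\|\le\|f_k\|^2\to 0$) is precisely condition $(1)$ of the single-chaos Theorem~\ref{thm.DNN}; its equivalence with condition $(2)$ there yields $f_k\overset{p}{\frown}f_k\to 0$ for $p=1,\dots,m-1$, and symmetrically for $g_k$.

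The substantial step is $(2)\Rightarrow(3)$, where I would establish joint convergence of all moments by a $q$-diagram (Wick) expansion. Expanding $\tau_q[(X_k+Y_k)^r]$ over words $Z^{(1)}\cdots Z^{(r)}$ with $Z^{(i)}\in\{X_k,Y_k\}$ and iterating the $q$-product formula $I_q(f)I_q(g)=\sum_k I_q(f\overset{k}{\frown}_q g)$, each mixed moment becomes a sum over contraction diagrams, of which $\tau_q$ retains the fully contracted (scalar) ones. I would separate the diagrams that use only full contractions between pairs of equal-degree factors---necessarily pairing an $X$ with an $X$ and a $Y$ with a $Y$, since opposite parity forces $m\neq n$---from the remaining diagrams, each of which contains at least one genuine partial contraction $f_k\overset{p}{\frown}f_k$ or $g_k\overset{p}{\frown}g_k$ with $1\le p\le\deg-1$. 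Using condition $(2)$ together with Cauchy--Schwarz and the uniform norm bounds, the latter diagrams vanish in the limit, exactly as in the single-order estimates of \cite{MR3089666}. The surviving full-pairing diagrams are indexed by pair partitions $\pi\in P_2(r)$ subject to a type-consistency constraint (the source of the factor $\delta_{\alpha(i)\alpha(j)}$ in Definition~\ref{mixedQ}), and the $q$-symmetrization weight of such a diagram is $q$ raised to the crossing number weighted by the product of the crossing blocks' degrees: a crossing of two $X$-blocks yields $q^{m^2}$, of two $Y$-blocks $q^{n^2}$, and of an $X$- with a $Y$-block $q^{mn}$. This is exactly the moment formula of Definition~\ref{mixedQ} for the matrix $Q$ and vector $(\sigma_X,\sigma_Y)$, proving $(3)$.

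The main obstacle is this final diagram bookkeeping in $(2)\Rightarrow(3)$: one must show that only full, type-preserving pairings survive and that their $q$-weights assemble into the prescribed crossing statistic of the mixed $Q$-Gaussian. The marginal behavior was handled in \cite{MR3089666}, but controlling the mixed contractions between the two chaos orders---and verifying that a full $X$--$Y$ crossing contributes precisely $q^{mn}$ rather than some other power---is the genuinely new combinatorial content, relying crucially on the opposite-parity hypothesis to rule out spurious full $X$--$Y$ contractions.
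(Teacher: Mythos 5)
Your proposal is correct in outline and, for two of the three implications, follows essentially the same route as the paper: the Haagerup-inequality reduction to moment convergence, the direct evaluation of Definition~\ref{mixedQ} at $n=4$, $d=2$ for $(3)\Rightarrow(1)$ (your pairing-by-pairing tally agrees with the paper's enumeration over the eight choices of $\alpha$), and the use of the positivity of $\kappa_X,\kappa_Y,\kappa_{X,Y}$ from Lemma~\ref{q-keylemma} to reduce $(1)\Rightarrow(2)$ to the single-chaos statement of Deya--Noreddine--Nourdin (your explicit handling of the $\sigma_X=0$ degenerate case and of the $[m]_q!$ normalization is a welcome extra care the paper elides by citing the internal version of their theorem). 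The genuine divergence is in $(2)\Rightarrow(3)$: you propose to carry out the full Wick/contraction-diagram expansion of $\tau_q[(X_k+Y_k)^r]$ from scratch, separating type-preserving full pairings from diagrams containing a nontrivial partial self-contraction, and you flag this bookkeeping as the main unresolved obstacle. The paper avoids this entirely by invoking the \emph{multidimensional} fourth moment theorem of \cite{MR3089666} (Theorem 3.1 there, equivalence of conditions (ii) and (iv)), which already asserts that vanishing of the self-contractions $f_k\overset{p}{\frown}f_k$ and $g_k\overset{p}{\frown}g_k$ yields \emph{joint} convergence of $(X_k,Y_k)$ to the mixed $Q$-Gaussian pair with covariance structure $q^{\deg\cdot\deg'}$; convergence of $X_k+Y_k$ then follows by summing. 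So the ``genuinely new combinatorial content'' you identify --- that a full $X$--$Y$ crossing contributes $q^{mn}$ and that mixed partial contractions are controlled --- is in fact already established in the literature, and your argument would be complete (and shorter) if you cited it; alternatively, your from-scratch diagram argument is the standard proof of that cited result and would work if written out, at the cost of reproving a known multivariate CLT. One small remark: for $(2)\Rightarrow(3)$ the opposite-parity hypothesis is used only through $m\neq n$ (to forbid full $X$--$Y$ contractions); its full strength is needed in Lemma~\ref{q-keylemma} and hence in $(1)\Rightarrow(2)$, where it kills the odd mixed moments $\tau_q(X^3Y)$ and $\tau_q(XY^3)$.
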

  \begin{proof}
  Note that since operator norms of all $X_k+Y_k$ are uniformly bounded on $n$ if $0\le q <1$ by Haagerup inequality \cite{MR1811255}, convergence in moments is equivalent to convergence in distribution. We can see that (3) implies (1) by computing the fourth moment of mixed $Q$-Gaussian. Actually, by following the definition \ref{mixedQ}, we have the eight possible choices of $(\alpha(1),\alpha(2),\alpha(3),\alpha(4))$; 
  \begin{align*}
      &(1,1,1,1),(2,2,2,2),(1,1,2,2),(1,2,2,1),
      \\&(2,2,1,1),
      (2,1,1,2)
      ,(1,2,1,2),(2,1,2,1).
  \end{align*}
  From $(1,1,1,1)$ and $(2,2,2,2)$, we get $(2+q^{m^2})\sigma_X^4$ and $(2+q^{n^2})\sigma_Y^4$ since we have two non-crossing pair partitions and one crossing pair partition.
  From $(1,1,2,2)$, $(1,2,2,1)$
$(2,2,1,1)$, and $(2,1,1,2)$, we have $4\sigma_X^2\sigma_Y^2$ since we have only one non-crossing pair partition which respects $\alpha$. From $(1,2,1,2)$ and $(2,1,2,1)$, we get $2q^{mn}\sigma_X^2\sigma_Y^2$ since we have only one crossing pair partition which respects $\alpha$.

If we assume (1), then Lemma \ref{q-keylemma} tells us that
  \[ \lim_{k\to \infty}\tau_q(X_k^4)=(2+q^{m^2})\sigma_X^4,\quad \lim_{k\to \infty}\tau_q(Y_k^4)=(2+q^{n^2})\sigma_Y^4,\]
  which is equivalent to (2) by Theorem 3.1 in \cite{MR3089666} (see the equivalence of the conditions (i) and (ii)).
  
  To prove (2) implies (3), it is enough to check that each moment of $X_k+Y_k$ converges to that of the mixed $Q$-Gaussian.  
  By Theorem 3.1 in \cite{MR3089666} (see the equivalence of the conditions (ii) and (iv)), the joint moments of $(X_k,Y_k)$ converge to those of the mixed $Q$-Gaussians $(X,Y)$ with respect to $\begin{pmatrix}
    \sigma_X\\0
\end{pmatrix}$ 
and $\begin{pmatrix}
   0\\ \sigma_Y
\end{pmatrix}$. Thus, each moment of $X_k+Y_k$ converges to that of $X+Y$ and this implies $X_k+Y_k$ converges in distribution to the mixed $Q$-Gaussian with respect to $\begin{pmatrix}
              \sigma_X\\ \sigma_Y
          \end{pmatrix}$. 
\end{proof}

\bibliographystyle{amsalpha}
\bibliography{reference.bib}
\end{document}